\documentclass[11pt,reqno]{amsart}
\usepackage{amsaddr}
\usepackage[ocgcolorlinks,hyperfootnotes=false,colorlinks=true,citecolor=blue,linkcolor=blue,urlcolor=blue]{hyperref}
\author{Tirthankar Bhattacharyya and Abhay Jindal}
\address{Department of Mathematics, 	Indian Institute of Science, \\
	Bangalore 560012, India}
\email{tirtha@iisc.ac.in; abjayj@iisc.ac.in}

\usepackage{color, bm, amscd, tikz-cd}
\usepackage{csquotes}
\setlength{\textheight}{610pt}
\setlength{\textwidth}{420pt}
\oddsidemargin 10mm
\evensidemargin 10mm

\newcommand{\bydef}{\stackrel{\rm def}{=}}

\usepackage{tikz}

\newcommand{\cB}{{\mathcal B}}

\newcommand{\cD}{{\mathcal D}}
\newcommand{\cE}{{\mathcal E}}
\newcommand{\cF}{{\mathcal F}}

\newcommand{\cH}{{\mathcal H}}

\newcommand{\cM}{{\mathcal M}}

\newcommand{\cW}{{\mathcal W}}

\newcommand{\la}{\langle}
\newcommand{\ra}{\rangle}

\newcommand{\bfT}{\textit{\textbf{T}}}

\newcommand{\bfM}{\textit{\textbf{M}}}

\newcommand{\bfZ}{\textit{\textbf{Z}}}
\newcommand{\bfW}{\textit{\textbf{W}}}

\newcommand{\bfB}{\textit{\textbf{B}}}
\newtheorem{thm}{Theorem}[section]

\newtheorem{lemma}[thm]{Lemma}

\newtheorem{definition}[thm]{Definition}
\newtheorem{remark}[thm]{Remark}

\numberwithin{equation}{section}

\def\textmatrix#1&#2\\#3&#4\\{\bigl({#1 \atop #3}\ {#2 \atop #4}\bigr)}
\def\dispmatrix#1&#2\\#3&#4\\{\left({#1 \atop #3}\ {#2 \atop #4}\right)}
\numberwithin{equation}{section}

\def\textmatrix#1&#2\\#3&#4\\{\bigl({#1 \atop #3}\ {#2 \atop #4}\bigr)}
\def\dispmatrix#1&#2\\#3&#4\\{\left({#1 \atop #3}\ {#2 \atop #4}\right)}

\begin{document}
	
	\title[Characteristic Functions]{Kernels With Complete Nevanlinna-Pick Factors And The Characteristic Function}
	\maketitle
	\begin{abstract}
The Sz.-Nagy Foias characteristic function for a contraction has had a rejuvenation in recent times due to a number of authors. Such a classical object relates to an object of very contemporary interest, viz., the complete Nevanlinna-Pick kernels. Indeed, a unitarily invariant kernel on the unit ball {\em admits} a characteristic function if and only if it is a complete Nevanlinna-Pick kernel. However, what has captured our curiosity are the recent advancements in constructing characteristic functions for kernels that do not have complete Nevanlinna-Pick property. In such cases, the reproducing kernel Hilbert space which has served as the domain of the multiplication operator has always been the vector-valued Drury-Arveson space (thus the Hardy space in case of the unit disc).

We present a unified framework for deriving characteristic functions for kernels that allow a complete Nevanlinna-Pick factor. Notably, our approach not only encapsulates all previously documented cases but also achieves a remarkable level of generalization, thereby expanding the concept of the characteristic function substantially. We also provide an explanation for the prominence of the Drury-Arveson kernel in all previously established results by showing that the Drury-Arveson kernel was the natural choice inherently suitable for those situations.
\end{abstract}
	
	{\footnotesize \noindent 2020 Mathematics Subject Classification: 47A45, 47A13, 46E22. \\
		Keywords: Complete Nevanlinna-Pick kernels, Dirichlet kernel, Drury-Arveson kernel, Characteristic function.}
	
	\section{Introduction}
	For an integer $d\geq 1$, a {\em unitarily invariant kernel} on the Euclidean open unit ball $\mathbb{B}_{d}$ in $\mathbb{C}^{d}$ is a function
	\begin{equation*}
		k(\bm{z},\bm{w})=\sum\limits_{n=0}^{\infty}a_{n}^{(k)} \langle \bm{z},\bm{w}\rangle^{n}, \quad \bm{z},\bm{w}\in\mathbb{B}_{d}
	\end{equation*}
	for some sequence of strictly positive numbers $\{a_{n}^{(k)}\}_{n\geq 0}$ such that $a_{0}^{(k)} =1.$ 	The associated reproducing kernel Hilbert space, denoted by $H_{k}$ and referred to as a {\em unitarily invariant space} has the {\em kernel functions}
	$$
	k_{\bm w}(\bm z) = k(\bm{z},\bm{w}) \hspace{5mm} \text{ for } \bm z , \bm{w} \in \mathbb{B}_{d}	
	$$
	as a total subset. We shall assume $k$ to be {\em non-vanishing} in $\mathbb{B}_{d}$. The {\em generalized Bergman kernels}
	\begin{equation}\label{km}
		k_{m}(\bm{z}, \bm{w}) = \left(\frac{1}{1 - \langle \bm{z}, \bm{w} \rangle}\right)^m; \bm{z}, \bm{w} \in \mathbb{B}_{d} \text{ and $m$ is a positive integer }
	\end{equation}
	as well as $k(\bm{z}, \bm{w}) = (\sum_{i=0}^n \langle \bm{z}, \bm{w} \rangle^i) \circ k_{m}(\bm{z}, \bm{w})$ for an integer $n$ where $\circ$ denotes the Schur product are examples of such kernels. When $m=1$, the kernel $k_m$ is known as the {\em Drury-Arveson kernel}.
	
	It is straightforward that there exists a sequence of real numbers $\{b_{n}^{(k)}\}_{n=1}^{\infty}$ such that the equality
	\begin{equation}\label{bn}
		\sum\limits_{n=1}^{\infty} b_{n}^{(k)} \la \bm{z}, \bm{w} \ra^{n} = 1- \frac{1}{\sum\limits_{n=0}^{\infty} a_{n}^{(k)}\la \bm{z}, \bm{w}\ra^{n}} 	
	\end{equation}
	holds for all $\bm{z},\bm{w} \in \mathbb{B}_{d}.$
	
	Denote by $\mathbb{Z}_{+}$ the set of all non-negative integers. Let $\alpha= (\alpha_{1}, \ldots ,\alpha_{d})\in\mathbb{Z}^{d}_{+}$ be a {\em multi-index}. Let $\bm{z}\in \mathbb{C}^{d}$. We need the following notations.
	$$ |\alpha|=\alpha_{1}+ \cdots +\alpha_{d}, \;\; \alpha !=\alpha_{1}! \ldots \alpha_{d}!, \;\; \binom{|\alpha|}{\alpha} = \frac{|\alpha|!}{\alpha_{1}!\dots\alpha_{d}!} \text{ and } \bm z^{\alpha} = z_{1}^{\alpha_{1}}  \ldots z_{d}^{\alpha_{d}}.$$
	To simplify notations, we define the coefficients $a_{\alpha}^{(k)}$ and $b_{\alpha}^{(k)}$ as follows:
	\begin{equation*}
		a_{\alpha}^{(k)} = \begin{cases}  a_{|\alpha|}^{(k)} \binom{|\alpha|}{\alpha}, & \alpha\in\mathbb{Z}^{d}_{+} \\ 0, & \alpha\in\mathbb{Z}^{d} \backslash \mathbb{Z}^{d}_{+} \end{cases},\quad  \text{and} \quad
		b_{\alpha}^{(k)} = b_{|\alpha|}^{(k)} \binom{|\alpha|}{\alpha},\quad  \alpha\in\mathbb{Z}^{d}_{+} \backslash \{0\}.
	\end{equation*}
	By a combinatorial argument, it follows that $k(\bm{z},\bm{w}) = \sum\limits_{\alpha\in\mathbb{Z}^{d}_{+}} a_{\alpha}^{(k)} \bm{z}^{\alpha} \overline{\bm{w}^{\alpha}}$. It is evident that the monomials $\{\bm z^{\alpha}\}_{\alpha\in\mathbb{Z}^{d}_{+}}$ serve as an orthogonal basis for the unitarily invariant space $H_{k}$. Furthermore, $ a_{\alpha} \|\bm z^{\alpha}\|^{2}_{H_{k}} = 1$ for all $\alpha\in\mathbb{Z}^{d}_{+}.$
	
	For a Hilbert space $\cE$,  the vector-valued Hilbert space $H_{k}(\cE)$ is defined as the class of all holomorphic $\cE$-valued functions on $\mathbb{B}_{d}$ whose Taylor expansion $f(\bm z) = \sum\limits_{\alpha\in\mathbb{Z}^{d}_{+}} c_{\alpha} \bm z^{\alpha}, c_{\alpha} \in \cE $ satisfies $\|f\|^{2}:= \sum\limits_{\alpha\in\mathbb{Z}^{d}_{+}} \|c_{\alpha}\|^{2} \|\bm z^{\alpha}\|^{2} < \infty  $.
	As a Hilbert space, $H_{k}(\cE)$ is equivalent to $H_{k} \otimes \cE,$ with the identification given by
	$$ \sum\limits_{\alpha\in\mathbb{Z}^{d}_{+}} c_{\alpha} \bm z^{\alpha} \rightarrow  \sum\limits_{\alpha\in\mathbb{Z}^{d}_{+}} (\bm{z}^{\alpha} \otimes c_{\alpha}). $$
	
	We use the notation $\bfT$ to denote a commuting $d$-tuple of bounded operators $(T_{1}, \dots, T_{d})$. The existence of a characteristic function for a {\em pure $1/k$-contractive} (to be defined in the context) operator tuple $\bfT$ will be shown in Section \ref{GeneralCHFN} for a certain class of kernels. In that section, we shall also answer a couple of natural questions related to the existence. Since the construction by Sz.-Nagy and Foias is explicit and relates to several important issues like the realization formula for example, we feel that mere existence is not enough. Hence, the  characteristic function will be constructed in Section \ref{construction} following an idea of \cite{Esch} which we recall for its influence on  further work \cite{BDS} as well as on the present paper.  An operator valued function $\theta: \mathbb{B}_{d} \to \mathcal{B}(\cE, \cF)$ is called {\em $k-$inner} if the induced map $\cE \to H_{k}(\cF)$ sending a vector $x$ of $\cE$ to the function $\bm z \mapsto \theta (z)x$ is isometric and the range of the induced map is orthogonal to $\bfM_{\bm z }^{\alpha} (\theta \cE)$ for all $\alpha $ with $|\alpha| \geq 1.$ Olofsson \cite{Olof}, Eschmeier \cite{Esch} and Eschmeier--Toth \cite{ES} studied $k-$inner functions, progressing in an increasing order of generality. Eschmeier--Toth developed a realization formula for such functions as well as associated a canonical $k-$inner function with every pure $1/k-$contraction for a special class of kernels $k$. This function is closely related to the characteristic function that we shall develop. We shall comment more on this connection in Remark \ref{K-inner}.

	\section{Generalization of the characteristic function} \label{GeneralCHFN}
	As we mentioned before, this section will show the existence of a characteristic function for a certain class of operators related to a certain class of kernels.
	
	\subsection{The operators}
	For a given multi-index $\alpha\in\mathbb{Z}^{d}_{+},$ the notation $\bfT^{\alpha}$ will stand for $T_{1}^{\alpha_{1}} \dots T_{d}^{\alpha_{d}}$. The following definition, which is inspired by the expression in \eqref{bn} within the framework of a unitarily invariant kernel on $\mathbb{B}_{d}$, is introduced in \cite{CH} and plays a vital role in our analysis.
	\begin{definition}
		For a unitarily invariant kernel $k$ on $\mathbb{B}_{d}$ and a commuting $d$-tuple of bounded operators $\bfT$, if the series $\sum\limits_{\alpha\in\mathbb{Z}^{d}_{+} \backslash \{0\}} b_{\alpha}^{(k)} \bfT^{\alpha}(\bfT^\alpha)^{*}$ converges in the strong operator topology to a contraction, then the $d-$tuple $\bfT$ is referred to as a $1/k$-contraction.
		In this case, we denote the unique positive square root of the positive operator $ I-\sum\limits_{\alpha\in\mathbb{Z}^{d}_{+} \backslash \{0\}} b_{\alpha}^{(k)} \bfT^{\alpha}(\bfT^{\alpha})^{*}$ by $\Delta_{\bfT}.$
	\end{definition}
	In the case of the Szeg\H{o} kernel on the unit disk $\mathbb{D}$, a contraction $T$ is indeed a $1/k$-contraction. For the Drury-Arveson kernel on $\mathbb{B}_{d}$, a {\em commuting contractive tuple} $\bfT = (T_{1}, \ldots, T_{d})$ is a $1/k$-contraction. This notion is also referred to as a $d$-contraction in Arveson's terminology, as mentioned on page 175 of \cite{curv}.

The idea for $1/k$-contractions that are sometimes also called $k$-contractions goes back to Agler (see for example \cite{Agler1} and \cite{Agler2}). The idea has also been developed by Pott \cite{Pott}. Eschmeier--Toth in \cite{ES} and Schillo in  \cite{Sch} give a unified approach to $1/k$-contractions for large class of kernels $k$.  See also Abadias--Bello--Yakubovich \cite{ABY} and Clouatre--Timko \cite{CT} for various aspects of the topic.

	\begin{definition}\label{pure}
	A $1/k-$contraction $\bfT = (T_{1}, \hdots,T_{d})$ is called pure if the series  $$\sum\limits_{\alpha\in\mathbb{Z}^{d}_{+}} a_{\alpha}^{(k)} \bfT^{\alpha} \Delta_{\bfT}^{2}(\bfT^{\alpha})^{*}$$
	converges strongly to the identity operator $I.$
	\end{definition}
	
	The concept of purity for $1/k$-contractions is well-established and originates from the works of Ambrozie--Englis--Muller \cite{AEM} and Arazy--Englis \cite{AE}. It has been extensively studied in the literature. Let $\bfT = (T_{1}, \dots, T_{d})$ be a tuple of bounded operators on a Hilbert space $\cH.$ Let $\cM$ be closed subspace of $\cH.$ Then $\cM$ is an invariant subspace for $\bfT$ if $T_{i} \cM \subseteq \cM$ for all $i$, and $\cM$ is a co-invariant subspace for $\bfT$ if $T_{i}^{*} \cM \subseteq \cM$ for all $i.$  One important property of pure $1/k$-contractions is that their compressions to co-invariant subspaces also yield pure $1/k$-contractions. In other words, if $\bfT = (T_{1}, \ldots, T_{d})$ is a pure $1/k$-contraction, and $P_{\cH}\bfT|_{\cH} = (P_{\cH}T_{1}|_{\cH}, \dots, P_{\cH}T_{d}|_{}\cH)$ is the compression of $\bfT$ to a co-invariant subspace $\cH$, then $P_{\cH}\bfT|_{\cH}$ is also a pure $1/k$-contraction. The next tool is a construction introduced by Arazy and Englis in \cite{AE}, which has its roots in \cite{AEM}. The following theorem, which we will refer to, can be found in Theorem 1.3 of \cite{AE}.
	\begin{thm} \label{V_T}
		Suppose we have a pure $1/k$-contraction $\bfT = (T_{1}, \ldots,T_{d})$ acting on a Hilbert space $\cH$.  Then the linear map $V_{\bfT}:\cH \to H_{k} \otimes \overline{\rm Ran} \Delta_{\bfT}$ given by
		$$h \mapsto \sum\limits_{\alpha\in\mathbb{Z}^{d}_{+}} a_{\alpha}^{(k)} \bm{z}^{\alpha} \otimes \Delta_{\bfT}(\bfT^{\alpha})^{*}h$$ is an isometry. Moreover it satisfies the relation $$V_{\bfT}^{*}(M_{z_{i}}^{(k)}\otimes I_{\overline{\rm Ran} \Delta_{\bfT}}) = T_{i} V_{\bfT}^{*}$$ for all $i=1,\dots,d.$
	\end{thm}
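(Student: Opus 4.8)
The plan is to verify the two assertions separately, in each case reducing everything to the orthogonality relations of the monomial basis. Recall from the setup that $\{\bm z^{\alpha}\}_{\alpha\in\mathbb{Z}^{d}_{+}}$ is an orthogonal basis of $H_{k}$ with $\|\bm z^{\alpha}\|^{2}_{H_{k}} = 1/a_{\alpha}^{(k)}$. Consequently the summands $a_{\alpha}^{(k)}\,\bm z^{\alpha}\otimes \Delta_{\bfT}(\bfT^{\alpha})^{*}h$ of $V_{\bfT}h$ are mutually orthogonal in $H_{k}\otimes\overline{\rm Ran}\,\Delta_{\bfT}$, which is the feature that makes both computations transparent.

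For the isometry I would compute the norm of the (a priori formal) series term by term. Orthogonality kills the cross terms, and the identity $a_{\alpha}^{(k)}\|\bm z^{\alpha}\|^{2}=1$ causes one power of $a_{\alpha}^{(k)}$ to cancel, so that $\|V_{\bfT}h\|^{2}$ collapses to $\sum_{\alpha} a_{\alpha}^{(k)}\|\Delta_{\bfT}(\bfT^{\alpha})^{*}h\|^{2} = \sum_{\alpha} a_{\alpha}^{(k)}\langle \bfT^{\alpha}\Delta_{\bfT}^{2}(\bfT^{\alpha})^{*}h, h\rangle$. The partial sums of this last scalar series are increasing and, by the purity hypothesis of Definition \ref{pure}, converge to $\langle h, h\rangle$. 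This single computation does double duty: the finiteness of the sum forces the tail norms of the series defining $V_{\bfT}h$ to vanish, so the series converges in $H_{k}\otimes\overline{\rm Ran}\,\Delta_{\bfT}$, and the limiting value $\langle h,h\rangle$ gives $\|V_{\bfT}h\|=\|h\|$. The only point needing care is the interchange of the Hilbert-space norm with the infinite sum, and this is licensed precisely by the summability just established.

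For the intertwining relation I would avoid manipulating $V_{\bfT}$ directly and instead identify $V_{\bfT}^{*}$ on the total set $\{\bm z^{\gamma}\otimes e : \gamma\in\mathbb{Z}^{d}_{+},\ e\in\overline{\rm Ran}\,\Delta_{\bfT}\}$. Pairing $\bm z^{\gamma}\otimes e$ against $V_{\bfT}h$ and invoking orthogonality leaves only the $\alpha=\gamma$ term; the factor $a_{\gamma}^{(k)}\|\bm z^{\gamma}\|^{2}=1$ then cancels, giving $\langle V_{\bfT}^{*}(\bm z^{\gamma}\otimes e), h\rangle = \langle \bfT^{\gamma}\Delta_{\bfT}e, h\rangle$ for every $h\in\cH$, and hence $V_{\bfT}^{*}(\bm z^{\gamma}\otimes e) = \bfT^{\gamma}\Delta_{\bfT}e$.

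With this closed form for $V_{\bfT}^{*}$ the desired identity is immediate. Writing $e_{i}$ for the multi-index with a $1$ in the $i$-th slot, one has $(M_{z_{i}}^{(k)}\otimes I)(\bm z^{\gamma}\otimes e)=\bm z^{\gamma+e_{i}}\otimes e$, while commutativity of the tuple gives $\bfT^{\gamma+e_{i}}=T_{i}\bfT^{\gamma}$. Therefore $V_{\bfT}^{*}(M_{z_{i}}^{(k)}\otimes I)(\bm z^{\gamma}\otimes e)=\bfT^{\gamma+e_{i}}\Delta_{\bfT}e=T_{i}\bfT^{\gamma}\Delta_{\bfT}e=T_{i}V_{\bfT}^{*}(\bm z^{\gamma}\otimes e)$, and since these vectors are total the relation $V_{\bfT}^{*}(M_{z_{i}}^{(k)}\otimes I)=T_{i}V_{\bfT}^{*}$ follows for each $i$. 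I expect no serious obstacle here: the genuine content of the theorem is concentrated in the isometry step, where purity is exactly the hypothesis needed to sum the series to the identity, and once that is in hand the remaining assertions are orthogonality bookkeeping.
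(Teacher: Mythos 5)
The paper itself offers no proof of this statement: it is quoted directly from Arazy--Engli\v{s} (Theorem 1.3 of \cite{AE}), so there is no internal argument to compare yours against. Judged on its own merits, your proof is correct and complete, and it is the standard argument for this dilation result. The orthogonality of the monomials together with the normalization $a_{\alpha}^{(k)}\|\bm z^{\alpha}\|^{2}_{H_{k}}=1$ reduces $\|V_{\bfT}h\|^{2}$ to $\sum_{\alpha}a_{\alpha}^{(k)}\langle \bfT^{\alpha}\Delta_{\bfT}^{2}(\bfT^{\alpha})^{*}h,h\rangle$, and purity (Definition \ref{pure}) is precisely the assertion that this sum of non-negative terms is $\|h\|^{2}$; since the summands of $V_{\bfT}h$ are pairwise orthogonal, the square-summability you establish simultaneously proves that the defining series converges in $H_{k}\otimes\overline{\rm Ran}\,\Delta_{\bfT}$, a point you correctly flag rather than gloss over. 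Your closed form $V_{\bfT}^{*}(\bm z^{\gamma}\otimes e)=\bfT^{\gamma}\Delta_{\bfT}e$ on the total set of vectors $\bm z^{\gamma}\otimes e$, combined with commutativity of the tuple, gives the intertwining identity on that total set; one small point worth making explicit is that passing from the total set to the operator identity $V_{\bfT}^{*}(M_{z_{i}}^{(k)}\otimes I)=T_{i}V_{\bfT}^{*}$ uses boundedness of $M_{z_{i}}^{(k)}$, which is condition (1) of admissibility imposed by the paper in the subsection following the theorem (and is implicit in even stating the relation as an equality of everywhere-defined operators). With that caveat noted, nothing is missing.
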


	\subsection{The kernels}
	Let $\cE$ and $\cF$ be two Hilbert spaces. A $\cB(\cE, \cF)-$valued function $\varphi$ defined on $\mathbb{B}_{d}$ is called a {\em multiplier} from $H_{k} \otimes \cE$ to $H_{k} \otimes \cF$ if the function $\varphi f$ is in $H_{k}\otimes \cF$ for any $f$ in $H_{k}\otimes \cE$. We denote the linear operator of multiplication by $\varphi$ as $M_{\varphi}$. Using the Closed Graph Theorem, $M_{\varphi}$ is a bounded operator. The set of all such functions $\varphi$ is denoted by $Mult(H_{k} \otimes \cE, H_{k} \otimes \cF)$.
	
	In order for the multiplication operators by coordinate functions $z_i$ to serve as the model operators, it is essential to impose restrictions on the class of kernels under consideration. The choice of kernels are assumed to satisfy:
	\begin{enumerate}
		\item The multiplication operators $M_{z_i}^{(k)}$ by the coordinate functions $z_i$ are bounded operators on $H_k$ for any $i=1,\dots,d.$
		\item The tuple $\bfM_{z}^{(k)} \bydef (M_{z_{1}}^{(k)}, \dots, M_{z_{d}}^{(k)})$ is a $1/k$-contraction.
	\end{enumerate}
	Kernels that satisfy these conditions are referred to as {\em admissible kernels}. 
	
	For $(1)$, it is necessary and sufficient that 
	$$\sup\limits_{n \in \mathbb{N}} \left(\frac{a_{n}^{(k)}}{a_{n+1}^{(k)}} \right) < \infty.$$
	For $(2),$ Proposition 2.1 and Lemma 2.2 in \cite{Chen} give a sufficient condition. The generalized Bergman kernels $k_{m}$ defined in \eqref{km} serve as examples of admissible kernels, as shown in \cite{MV}. More generally, any finite product of {\em unitarily invariant CNP kernels} (to be defined later in this section) is an admissible kernel, see Proposition 5.4 in \cite{O}. For an admissible kernel $k$, it is known that $\Delta_{\bfM_{\bm{z}}} = E_{0},$ where $E_{0}$ denotes the projection of $H_{k}$ onto the subspace of constant functions. Furthermore, the operator tuple $\bfM_{\bm{z}}^{(k)}$ is pure in the sense of Definition \ref{pure}, as stated in Lemma 2.1 in \cite{BJ}.
	
	Let $T$ be a contraction on a Hilbert space. The Sz.-Nagy Foias characteristic function $\theta_T$ of $T$ is given by
	$$\theta_{T}(z) = -T + z D_{T^{*}} (I - z T^{*})^{-1} D_{T}, \quad z \in \mathbb{D}$$
	where $D_T$ and $D_{T^{*}}$ are the classical defect operators. It induces a multiplier from $H^2 \otimes \mathcal D_T$ into $H^2 \otimes \mathcal D_{T^*}$ which factors the projection $ I - V_{\bfT} V_{\bfT}^{*}$ (onto a shift-invariant subspace) in the sense that $ I - V_{\bfT} V_{\bfT}^{*} = M_{\theta_T}M_{\theta_T}^*$. It follows from Halmos's uniqueness theorem for Lax's characterization of invariant subspaces - see Theorem 4 in \cite{Halmos} - that up to multiplication by a partial isometry, $\theta_T$ is the only function which does this factoring. This characterization of $\theta_T$ in terms of the projection $ I - V_{\bfT} V_{\bfT}^{*}$ led us to define the notion of a characteristic function in a much more general setting, see Definition 2.5 in \cite{BJ}. It then turns out that existence of a characteristic function is necessary and sufficient for the kernel to be a complete Nevanlinna-Pick kernel, see Theorem 3.43 in \cite{BJ}.

	\begin{definition}
		A reproducing kernel $k$ on the open unit ball $\mathbb{B}_{d}$ is said to have the $M_{m\times n}$ Nevanlinna-Pick property if, for any set of points $\bm{\lambda}_{1}, \ldots ,\bm{\lambda}_{N}$ in $\mathbb{B}_{d}$ and $m$-by-$n$ matrices $W_{1}, \ldots ,W_{N}$ satisfying the condition
		$$(I-W_{i}W_{j}^{*}) k(\bm{\lambda}_{i}, \bm{\lambda}_{j}) \geq 0,$$
		there exists a multiplier $\varphi$ in the closed unit ball of $$Mult(H_{k} \otimes \mathbb{C}^{n}, H_{k} \otimes \mathbb{C}^{m})$$ such that $\phi(\bm{\lambda}_{i})=W_{i},i=1, \ldots ,N.$ If the reproducing kernel $k$ has the $M_{m \times n}$ Nevanlinna-Pick property for all positive integers $m$ and $n$, it is said to have the complete Nevanlinna-Pick property. In this case, the corresponding reproducing kernel Hilbert space is called a complete Nevanlinna-Pick space.
	\end{definition}
	
	\begin{definition}
		A reproducing kernel Hilbert space $H_{k}$ is said to be irreducible if the reproducing kernel $k(\bm z, \bm w)$ is nonzero for all $\bm z, \bm w$ in the open unit ball $\mathbb{B}_{d}$, and the kernel functions $k_{\bm w}$ and $k_{\bm v}$ are linearly independent whenever $\bm v \neq \bm w$.
	\end{definition}
	
	One of the main tools we will utilize is a well-known result, which can be found in Lemma 2.3 of \cite{H} (or Lemma 7.33 of \cite{AM} in the one-variable case).
	
	\begin{lemma}
		Let $H_{k}$ be a unitarily invariant space on $\mathbb{B}_{d}$ with reproducing kernel given by
		$$ k(\bm{z},\bm{w}) = \sum\limits_{n=0}^{\infty}a_{n}^{(k)}\langle \bm{z}, \bm{w} \rangle^{n}.$$
		Then the following are equivalent:
		\begin{enumerate}
			\item $H_{k}$ is an irreducible complete Nevanlinna-Pick space.
			\item The sequence $\{b_{n}^{(k)}\}_{n=1}^{\infty}$ defined by (\ref{bn}) is a sequence of non-negative real numbers.
		\end{enumerate}
	\end{lemma}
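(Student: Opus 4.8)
The plan is to reduce the statement to the classical characterization of complete Nevanlinna--Pick kernels due to McCullough and Quiggin (see \cite{AM} and \cite{H}): an irreducible kernel that is non-vanishing on $\mathbb{B}_{d}$ is a complete Nevanlinna--Pick kernel if and only if $1-1/k$ is a positive semi-definite kernel. By the defining relation \eqref{bn}, the function $1-1/k$ is precisely the unitarily invariant kernel $\sum_{n=1}^{\infty} b_{n}^{(k)}\langle \bm{z},\bm{w}\rangle^{n}$, so once the McCullough--Quiggin theorem is invoked the entire content of the lemma becomes the assertion that this function is a positive kernel if and only if every $b_{n}^{(k)}$ is non-negative. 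I would first dispose of irreducibility: since $k$ is assumed non-vanishing and $a_{1}^{(k)}>0$, an identity $k_{\bm{w}}=c\,k_{\bm{v}}$ forces $c=1$ upon evaluating at $\bm{z}=0$, and then comparison of the first-order terms in $\bm{z}$ (whose coefficient is $a_{1}^{(k)}\overline{\bm{w}}$) forces $\bm{w}=\bm{v}$. Hence the kernel functions are linearly independent at distinct points and $H_{k}$ is automatically irreducible, so ``irreducible complete Nevanlinna--Pick'' in (1) carries no weight beyond the complete Nevanlinna--Pick property itself.

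For the implication $(2)\Rightarrow(1)$, I would observe that $\langle \bm{z},\bm{w}\rangle$ is a positive kernel on $\mathbb{B}_{d}$, being the Gram function $(\bm{\lambda}_{i},\bm{\lambda}_{j})\mapsto\langle\bm{\lambda}_{i},\bm{\lambda}_{j}\rangle$. By the Schur product theorem each power $\langle \bm{z},\bm{w}\rangle^{n}$ is again positive, and a convergent series with non-negative coefficients $b_{n}^{(k)}\geq 0$ of positive kernels is positive; therefore $1-1/k$ is a positive kernel. McCullough--Quiggin, together with the automatic irreducibility established above, then yields (1).

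For the harder implication $(1)\Rightarrow(2)$, McCullough--Quiggin gives that $1-1/k=\sum_{n\geq 1}b_{n}^{(k)}\langle \bm{z},\bm{w}\rangle^{n}$ is a positive kernel, and the main work is to read off the sign of each coefficient. I would first restrict to a one-dimensional slice: fixing a unit vector $\bm{u}\in\mathbb{C}^{d}$ and setting $\bm{z}=z\bm{u}$, $\bm{w}=w\bm{u}$ with $z,w\in\mathbb{D}$ turns $\langle \bm{z},\bm{w}\rangle$ into $z\overline{w}$, so, the restriction of a positive kernel being positive, $\sum_{n\geq1}b_{n}^{(k)}(z\overline{w})^{n}$ is a positive kernel on $\mathbb{D}$. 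Evaluating it on the $N$ equally spaced points $z_{j}=re^{2\pi ij/N}$ $(0\leq j\leq N-1,\ 0<r<1)$ produces a circulant matrix whose eigenvectors are the characters $(\omega^{\ell k})_{k}$ with $\omega=e^{2\pi i/N}$; a direct computation, using $\sum_{k}\omega^{(\ell-n)k}=N$ when $n\equiv\ell$ and $0$ otherwise, gives the eigenvalues
\[
\mu_{\ell}=N\sum_{n\equiv \ell \ (\mathrm{mod}\ N)} b_{n}^{(k)}\,r^{2n},\qquad \ell=0,\dots,N-1.
\]
Positive semi-definiteness forces $\mu_{\ell}\geq 0$ for every $\ell$. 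Fixing an index $n_{0}$, choosing $N>n_{0}$ and $\ell=n_{0}$, and then letting $r\to 0^{+}$ isolates the leading term $b_{n_{0}}^{(k)}r^{2n_{0}}$, whence $b_{n_{0}}^{(k)}\geq 0$.

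I expect the only genuinely deep ingredient to be the McCullough--Quiggin theorem, which I would simply cite. Given it, the single technical point is the coefficient-extraction step in $(1)\Rightarrow(2)$: one must justify that the tail $\sum_{j\geq 1} b_{n_{0}+jN}^{(k)} r^{2jN}$ tends to $0$ as $r\to 0^{+}$. This is routine, since the power series $\sum_{n} b_{n}^{(k)} s^{n}$ converges at $s=r^{2}\in[0,1)$ and hence converges absolutely for small $s$, making the tail estimate immediate. The slice restriction and the circulant diagonalization are elementary, so the eigenvalue formula for $\mu_{\ell}$ is really the crux of the argument.
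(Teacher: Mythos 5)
The paper offers no proof of this lemma at all: it is quoted as a known result, with pointers to Lemma 2.3 of \cite{H} and Lemma 7.33 of \cite{AM}, so there is no internal argument to compare yours against. Your proof is correct, and it is essentially a self-contained reconstruction of the argument in those cited sources: reduce to the McCullough--Quiggin characterization (an irreducible, non-vanishing kernel is CNP if and only if the kernel $1-1/k$, normalized at the origin where $k(\cdot,0)\equiv a_{0}^{(k)}=1$, is positive semi-definite), and then show that a kernel of the form $\sum_{n\geq 1}b_{n}^{(k)}\langle \bm{z},\bm{w}\rangle^{n}$ is positive semi-definite exactly when every $b_{n}^{(k)}\geq 0$. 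Your supporting steps all check out: irreducibility is indeed automatic in this setting (non-vanishing is the paper's standing assumption, and your first-order Taylor comparison using $a_{1}^{(k)}>0$ gives linear independence of kernel functions at distinct points); the Schur-product argument for $(2)\Rightarrow(1)$ is standard; the circulant diagonalization is right (for $n\equiv\ell\ (\mathrm{mod}\ N)$ one has $\omega^{nj}=\omega^{\ell j}$, so the characters are genuine eigenvectors with your stated eigenvalues); and the tail estimate is justified because $k(\bm{z},\bm{w})=\phi(\langle\bm{z},\bm{w}\rangle)$ with $\phi$ holomorphic and non-vanishing on $\mathbb{D}$, so $1-1/\phi$ is holomorphic on $\mathbb{D}$ and $\sum_{n}b_{n}^{(k)}t^{n}$ has radius of convergence at least $1$. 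One simplification worth noting: the discretization and the limit $r\to 0^{+}$ can be avoided entirely, since the slice kernel $\sum_{m}b_{m}^{(k)}r^{2m}e^{im(\theta-\phi)}$ is continuous and positive semi-definite, so integrating it against $e^{-in\theta}\,\overline{e^{-in\phi}}$ over the torus $[0,2\pi]^{2}$ gives $(2\pi)^{2}b_{n}^{(k)}r^{2n}\geq 0$ directly, with no congruence bookkeeping.
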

	
	\begin{definition}
		A reproducing kernel $s$ is called a unitarily invariant complete Nevanlinna-Pick (CNP) kernel if it satisfies the following conditions:
		\begin{enumerate}
			\item The kernel $s$ is of the form $$s(\bm{z}, \bm{w}) = \sum\limits_{n=0}^{\infty} a_{n}^{(s)} \la \bm{z}, \bm{w} \ra^{n} \hspace{5mm} (\bm{z} , \bm{w} \in \mathbb{B}_{d})$$ for a  sequence of strictly positive coefficients $\{a_{n}^{(s)}\}_{n \geq 0}$ with $a_{0}^{(s)} =1.$
			\item $H_{s}$ is an irreducible complete Nevanlinna-Pick space.
		\end{enumerate}
	\end{definition}
	
	It is worth noting that unitarily invariant CNP kernels are admissible. This property is stated in Lemma 5.2 of \cite{CH}. In this paper, the symbol $s$ will consistently represent a unitarily invariant CNP kernel. The $1/s$-contractions for CNP kernels $s$ have been treated in \cite{BJ} and \cite{CH} and many other papers. 
	
	In general, the generalized Bergman kernels $k_{m}$ defined in \eqref{km} are not always unitarily invariant CNP. Some basic examples of unitarly invariant CNP kernels are the Drury-Arveson kernel and the Dirichlet kernel.
	
	While the generalized Bergman kernels $k_m$ defined in \eqref{km} serve as examples of admissible kernels, they are not CNP kernels except for $m=1$. However, there are other well-known examples of unitarily invariant CNP kernels. Two notable examples are the Drury-Arveson kernel and the Dirichlet kernel. These kernels have been extensively studied and play a fundamental role in the theory of unitarily invariant spaces and operator theory.

	In spite of the fact that existence of a characteristic function (according to Definition 2.5 in \cite{BJ}) is a characterization of CNP kernels, there has been considerable development in the last few years to define a characteristic function when $k$ is not a CNP kernel, see \cite{BDS} when $k$ is a generalized Bergman kernel in several variables and see \cite{BDRS} when $k$ is a kernel associated with so-called $w$-hypercontractions. Of course, this means then that the characteristic function cannot be a multiplier between vector valued reproducing kernel Hilbert spaces with the kernel being $k$ at both the domain and the range. We put together a general framework below which encompasses all these works.
	
	\subsection{The Characteristic Function}
	
	\begin{definition}
		Let $k$ and $l$ be admissible kernels on $\mathbb{B}_{d}.$ A pure $1/k-$contraction $\bfT = (T_{1}, \dots, T_{d})$ is said to admit a characteristic function through the kernel $l$ if there exist a Hilbert space $\cE$ and $\cB(\cE, \overline{\rm Ran} \Delta_{\bfT})-$valued analytic function $\theta_{\bfT}$ defined on $\mathbb{B}_{d}$ such that $M_{\theta_{\bfT}}$ is a multiplication operator from $H_{l} \otimes \cE$ to $H_{k} \otimes \overline{\rm Ran} \Delta_{\bfT}$ satisfying $$ I - V_{\bfT} V_{\bfT}^{*} = M_{\theta_{\bfT}} M_{\theta_{\bfT}}^{*}$$
		where $V_{\bfT}$ is the isometry defined in Theorem \ref{V_T}.
	\end{definition}
	
	This generalized definition allows for the consideration of different admissible kernels $k$ and $l$ and provides a framework to investigate the existence of characteristic functions for pure $1/k$-contractions, even in cases where $k$ is not a unitarily invarint CNP kernel. Let us recall Definition 2.7 of the associated tuple from \cite{BJ}:
	\begin{definition}
		Let $\bfT = (T_{1}, \dots, T_{d})$ be a pure $1/k-$contraction. The associated tuple of commuting operators $\bfB_{\bfT}$ is defined on ${\rm Ker} V_{\bfT}^{*}$ as $$ \bfB_{\bfT} = ((M_{z_{1}}^{(k)} \otimes I)|_{{\rm Ker} V_{\bfT}^{*}}, \dots, (M_{z_{d}}^{(k)} \otimes I)|_{{\rm Ker} V_{\bfT}^{*}}).$$
	\end{definition}
	
	The proof of the following theorem follows a similar line of reasoning as presented in the proof of Theorem 2.8 in \cite{BJ}. It establishes the equivalence between the existence of a characteristic function for $\bfT$ through the kernel $l$ and the associated tuple $\bfB_{\bfT}$ being a $1/l$-contraction. This result highlights the close connection between characteristic functions and the associated tuples.
	\begin{thm}\label{characterization2}
		Let $k$ and $l$ be admissible kernels on $\mathbb{B}_{d}.$ A pure $1/k-$contraction $\bfT$ admits a characteristic function through the kernel $l$ if and only if the associated tuple $\bfB_{\bfT}$ is a $1/l-$contraction.
	\end{thm}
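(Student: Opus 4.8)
The plan is to establish the equivalence by analyzing the kernel of $V_{\bfT}^*$ directly, since both sides of the claimed equivalence are governed by the projection $I - V_{\bfT}V_{\bfT}^*$ onto $\operatorname{Ker} V_{\bfT}^*$. First I would recall that $V_{\bfT}$ is an isometry by Theorem \ref{V_T}, so $I - V_{\bfT}V_{\bfT}^*$ is precisely the orthogonal projection $P$ onto $\operatorname{Ker} V_{\bfT}^* \subseteq H_k \otimes \overline{\operatorname{Ran}}\,\Delta_{\bfT}$. The intertwining relation $V_{\bfT}^*(M_{z_i}^{(k)} \otimes I) = T_i V_{\bfT}^*$ shows that $\operatorname{Ker} V_{\bfT}^*$ is invariant under each $M_{z_i}^{(k)} \otimes I$, so the associated tuple $\bfB_{\bfT}$ is genuinely a commuting tuple of operators on $\operatorname{Ker} V_{\bfT}^*$, and its adjoint is the compression of $(M_{z_i}^{(k)} \otimes I)^*$. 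This invariance is the structural fact that makes the whole argument work.

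For the direction assuming $\bfB_{\bfT}$ is a $1/l$-contraction, the plan is to construct $\theta_{\bfT}$ explicitly. The idea, following Eschmeier \cite{Esch} and \cite{BJ}, is to apply Theorem \ref{V_T} to the tuple $\bfB_{\bfT}$ itself, now viewed as a pure $1/l$-contraction: one must first check purity of $\bfB_{\bfT}$, which should follow from the purity of $\bfM_{\bm z}^{(k)}$ (recorded in the text as $\Delta_{\bfM_{\bm z}} = E_0$ with $\bfM_{\bm z}^{(k)}$ pure) together with the fact that $\bfB_{\bfT}$ is a compression to a co-invariant subspace for the $1/l$-structure. This yields an isometry $V_{\bfB_{\bfT}} : \operatorname{Ker} V_{\bfT}^* \to H_l \otimes \cE$ where $\cE = \overline{\operatorname{Ran}}\,\Delta_{\bfB_{\bfT}}$. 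I would then set $M_{\theta_{\bfT}} := \iota \circ V_{\bfB_{\bfT}}^*$, where $\iota$ is the inclusion of $\operatorname{Ker} V_{\bfT}^*$ into $H_k \otimes \overline{\operatorname{Ran}}\,\Delta_{\bfT}$. Since $V_{\bfB_{\bfT}}$ is an isometry, $V_{\bfB_{\bfT}}^* V_{\bfB_{\bfT}} = I$ need not hold but $V_{\bfB_{\bfT}} V_{\bfB_{\bfT}}^*$ is a projection; the defining identity $I - V_{\bfT}V_{\bfT}^* = P = \iota\iota^* = M_{\theta_{\bfT}} M_{\theta_{\bfT}}^*$ then follows provided $V_{\bfB_{\bfT}}$ is onto, i.e. that the range exhausts $H_l \otimes \cE$, or more carefully, provided one checks that $\iota V_{\bfB_{\bfT}}^*$ is the multiplier associated with an analytic $\theta_{\bfT}$. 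The main work here is verifying that the operator $\iota V_{\bfB_{\bfT}}^*$ is indeed a multiplication operator induced by an analytic $\cB(\cE, \overline{\operatorname{Ran}}\,\Delta_{\bfT})$-valued function, which amounts to checking the correct intertwining relations $(\iota V_{\bfB_{\bfT}}^*)(M_{z_i}^{(l)} \otimes I_{\cE}) = (M_{z_i}^{(k)} \otimes I)(\iota V_{\bfB_{\bfT}}^*)$; these follow by combining the intertwining property of $V_{\bfB_{\bfT}}$ from Theorem \ref{V_T} applied to $\bfB_{\bfT}$ with the co-invariance of $\operatorname{Ker} V_{\bfT}^*$.

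For the converse, assume $\bfT$ admits a characteristic function $\theta_{\bfT}$ through $l$, so $I - V_{\bfT}V_{\bfT}^* = M_{\theta_{\bfT}} M_{\theta_{\bfT}}^*$ with $M_{\theta_{\bfT}} : H_l \otimes \cE \to H_k \otimes \overline{\operatorname{Ran}}\,\Delta_{\bfT}$ a multiplier. The range of $M_{\theta_{\bfT}}$ equals the range of the projection $P$, which is $\operatorname{Ker} V_{\bfT}^*$, so $M_{\theta_{\bfT}}$ maps into $\operatorname{Ker} V_{\bfT}^*$ and intertwines $M_{z_i}^{(l)} \otimes I_{\cE}$ on the domain with $M_{z_i}^{(k)} \otimes I$ on the range. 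Restricting the range to $\operatorname{Ker} V_{\bfT}^*$, this intertwining says exactly that $M_{\theta_{\bfT}}$ intertwines the $1/l$-model tuple on $H_l \otimes \cE$ with $\bfB_{\bfT}$. Because $\bfM_{\bm z}^{(l)}$ is a $1/l$-contraction (as $l$ is admissible) and the defect sum is compatible under the intertwining furnished by the partial isometry coming from $M_{\theta_{\bfT}}$, the defect sum $\sum_{\alpha \neq 0} b_{\alpha}^{(l)} \bfB_{\bfT}^{\alpha}(\bfB_{\bfT}^{\alpha})^*$ is dominated by $I$; I would make this precise by pulling the contractivity of the $1/l$-defect of $\bfM_{\bm z}^{(l)} \otimes I_{\cE}$ through $M_{\theta_{\bfT}}$ onto $\operatorname{Ker} V_{\bfT}^*$, using that $M_{\theta_{\bfT}}M_{\theta_{\bfT}}^* = P$ is the identity on $\operatorname{Ker} V_{\bfT}^*$. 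This shows $\bfB_{\bfT}$ is a $1/l$-contraction. \emph{The hard part} I expect is controlling the strong-operator-topology convergence of the infinite defect series through the intertwining map and confirming that $\theta_{\bfT}$ constructed in the forward direction is genuinely analytic and multiplier-valued rather than merely a bounded Hilbert-space operator; both reduce to careful bookkeeping with the reproducing-kernel structure, precisely as in the proof of Theorem 2.8 of \cite{BJ}.
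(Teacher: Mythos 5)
Your overall route is the one the paper intends (the paper defers to the proof of Theorem 2.8 of \cite{BJ}): observe that ${\rm Ker}\, V_{\bfT}^{*}$ is invariant under $\bfM_{\bm z}^{(k)}\otimes I$, construct the characteristic function by applying Theorem \ref{V_T} to $\bfB_{\bfT}$ and setting $M_{\theta_{\bfT}}=\iota\circ V_{\bfB_{\bfT}}^{*}$ with $\iota$ the inclusion of ${\rm Ker}\, V_{\bfT}^{*}$, verify the multiplier property through the intertwining relations, and, for the converse, conjugate the defect series of $\bfM_{\bm z}^{(l)}\otimes I_{\cE}$ by $M_{\theta_{\bfT}}$ using $M_{\theta_{\bfT}}M_{\theta_{\bfT}}^{*}=I-V_{\bfT}V_{\bfT}^{*}$ and admissibility of $l$. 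However, two of your steps are genuinely flawed. First, your justification of purity of $\bfB_{\bfT}$ is wrong: $\bfB_{\bfT}$ is the \emph{restriction} of $\bfM_{\bm z}^{(k)}\otimes I$ to the \emph{invariant} subspace ${\rm Ker}\, V_{\bfT}^{*}$ --- as you yourself establish in your first paragraph --- and not a compression to a co-invariant subspace, so the purity-heredity fact recorded in the paper (compressions of pure tuples to co-invariant subspaces are pure) simply does not apply. Purity of $\bfB_{\bfT}$ as a $1/l$-contraction is a substantive lemma that needs its own argument (this is precisely one of the nontrivial steps in the proof of Theorem 2.8 of \cite{BJ}), and since Theorem \ref{V_T} cannot be invoked without purity, this gap sits at the foundation of your forward direction.

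Second, you have the defining property of an isometry backwards: $V_{\bfB_{\bfT}}$ being an isometry means exactly that $V_{\bfB_{\bfT}}^{*}V_{\bfB_{\bfT}}=I$; it is $V_{\bfB_{\bfT}}V_{\bfB_{\bfT}}^{*}$ that is merely the projection onto the range. Consequently $M_{\theta_{\bfT}}M_{\theta_{\bfT}}^{*}=\iota V_{\bfB_{\bfT}}^{*}V_{\bfB_{\bfT}}\iota^{*}=\iota\iota^{*}=I-V_{\bfT}V_{\bfT}^{*}$ holds automatically, and no surjectivity of $V_{\bfB_{\bfT}}$ is required. This is fortunate, because insisting on surjectivity would sink the argument: a surjective isometry is unitary, which would force $\bfB_{\bfT}$ to be unitarily equivalent to the full model tuple $\bfM_{\bm z}^{(l)}\otimes I_{\cE}$, something that fails in essentially every nontrivial example. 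The remaining ingredients you cite are sound: an operator intertwining $\bfM_{\bm z}^{(l)}\otimes I_{\cE}$ with $\bfM_{\bm z}^{(k)}\otimes I$ is indeed automatically multiplication by an analytic operator-valued function (via the joint-eigenvector structure of the adjoints on kernel functions), and your converse direction, though loosely worded, contains the correct computation, namely conjugating the partial sums of $\sum_{\alpha\neq 0} b_{\alpha}^{(l)}\,(\bfM_{\bm z}^{(l)\alpha}\otimes I_{\cE})(\bfM_{\bm z}^{(l)\alpha}\otimes I_{\cE})^{*}$ by $M_{\theta_{\bfT}}$, comparing with $M_{\theta_{\bfT}}M_{\theta_{\bfT}}^{*}$, and restricting to ${\rm Ker}\, V_{\bfT}^{*}$.
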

	
	The following definition presents a generalization of Definition 3.1 in \cite{BJ}:
	\begin{definition}
		Let $k$ and $l$ be admissible kernels on $\mathbb{B}_{d}.$ The kernel $k$ is said to admit a characteristic function through the kernel $l$ if every pure $1/k-$contraction admits a characteristic function through the kernel $l.$
	\end{definition}
	This notion allows for the examination of the existence of characteristic functions across a broader range of kernels and their relationships.
	
	\begin{lemma} \label{div}
		Assuming $k$ and $l$ are admissible kernels and $l(\bm{z}, \bm{w}) \neq 0$ for all $\bm{z}, \bm{w} \in \mathbb{B}_d$, the operator tuple $\bfM_{\bm{z}}^{(k)} = (M_{z_{1}}^{(k)}, \dots, M_{z_{d}}^{(k)})$ is a $1/l$-contraction if and only if the kernel $k/l$ is positive semi-definite.
	\end{lemma}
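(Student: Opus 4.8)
The plan is to reduce both implications to the elementary action of the adjoints $(M_{z_i}^{(k)})^{*}$ on the kernel functions, and then to separate the (easy) positivity bookkeeping from the (genuine) convergence issue. First I would record that $(M_{z_i}^{(k)})^{*}k_{\bm w} = \overline{w_i}\,k_{\bm w}$, so that $((\bfM_{\bm z}^{(k)})^{\alpha})^{*}k_{\bm w} = \overline{\bm w^{\alpha}}\,k_{\bm w}$ for every $\alpha$. Writing $C_N = \sum_{1\le|\alpha|\le N} b_{\alpha}^{(l)}(\bfM_{\bm z}^{(k)})^{\alpha}((\bfM_{\bm z}^{(k)})^{\alpha})^{*}$, this gives
$$\langle C_N k_{\bm w}, k_{\bm v}\rangle = \Big(\sum_{1\le|\alpha|\le N} b_{\alpha}^{(l)}\,\bm v^{\alpha}\overline{\bm w^{\alpha}}\Big)\,k(\bm v,\bm w) = \Big(\sum_{n=1}^{N} b_{n}^{(l)}\,\la\bm v,\bm w\ra^{n}\Big)\,k(\bm v,\bm w),$$
where I use the multinomial identity $\sum_{|\alpha|=n}\binom{n}{\alpha}\bm v^{\alpha}\overline{\bm w^{\alpha}} = \la\bm v,\bm w\ra^{n}$ together with $b_{\alpha}^{(l)} = b_{|\alpha|}^{(l)}\binom{|\alpha|}{\alpha}$. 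By \eqref{bn} the scalar factor converges to $1-1/l(\bm v,\bm w)$, so on kernel functions the operator $I-C_N$ sees exactly the quotient kernel: $\langle(I-C_N)k_{\bm w},k_{\bm v}\rangle \to k(\bm v,\bm w)/l(\bm v,\bm w)$.

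For the forward implication, assume $\bfM_{\bm z}^{(k)}$ is a $1/l$-contraction, so $C_N$ converges in the strong operator topology to a self-adjoint contraction $C$ with $I-C\ge 0$. Passing to the limit in the identity above yields $\langle(I-C)k_{\bm w},k_{\bm v}\rangle = (k/l)(\bm v,\bm w)$. Testing $I-C\ge 0$ against an arbitrary finite combination $\sum_i c_i k_{\bm w_i}$, and using that the kernel functions are total in $H_k$, I obtain $\sum_{i,j}\overline{c_i}c_j\,(k/l)(\bm w_i,\bm w_j)\ge 0$; that is, $k/l$ is positive semi-definite.

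For the converse I would avoid estimating the (not necessarily positive) partial sums $C_N$ directly, and instead import the required convergence from the admissibility of $l$. If $k/l$ is positive semi-definite then, being a unitarily invariant positive function, it is a reproducing kernel $s:=k/l$ on $\mathbb{B}_d$ with $l\cdot s = k$, giving rise to an RKHS $H_s$. The map $V\colon H_k\to H_l\otimes H_s$ determined on the total set by $k_{\bm w}\mapsto l_{\bm w}\otimes s_{\bm w}$ is then an isometry, since $\la l_{\bm w}\otimes s_{\bm w}, l_{\bm v}\otimes s_{\bm v}\ra = l(\bm v,\bm w)s(\bm v,\bm w) = k(\bm v,\bm w)$. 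A direct check on kernel functions shows $(M_{z_i}^{(l)}\otimes I)^{*}V = V(M_{z_i}^{(k)})^{*}$, so the range of $V$ is co-invariant for $\bfM_{\bm z}^{(l)}\otimes I$ and the latter compresses through $V$ to $\bfM_{\bm z}^{(k)}$, i.e. $V^{*}(\bfM_{\bm z}^{(l)}\otimes I)V = \bfM_{\bm z}^{(k)}$. Since $l$ is admissible, $\bfM_{\bm z}^{(l)}$ is a $1/l$-contraction with $\Delta_{\bfM_{\bm z}^{(l)}} = E_0$; tensoring with $I$, the tuple $\bfM_{\bm z}^{(l)}\otimes I$ is a $1/l$-contraction on $H_l\otimes H_s$, its series converging in the strong operator topology to $(I-E_0)\otimes I$. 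Finally, compressing a $1/l$-contraction to a co-invariant subspace again yields a $1/l$-contraction, by the same elementary argument that handles the pure case: on a co-invariant subspace $\cM$ the compression $\bfS$ of $\bfT$ satisfies $(\bfS^{\alpha})^{*} = (\bfT^{\alpha})^{*}|_{\cM}$, so that $\sum_{\alpha} b_{\alpha}^{(l)}\bfS^{\alpha}(\bfS^{\alpha})^{*} = P_{\cM}\big(\sum_{\alpha} b_{\alpha}^{(l)}\bfT^{\alpha}(\bfT^{\alpha})^{*}\big)\big|_{\cM}$ inherits both the strong convergence and the contractivity. Thus $\bfM_{\bm z}^{(k)}$ is a $1/l$-contraction.

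The main obstacle is precisely the convergence demanded by the definition of a $1/l$-contraction in the converse direction: because $l$ need not be a complete Nevanlinna-Pick kernel, the coefficients $b_{\alpha}^{(l)}$ can be negative, so $C_N$ is not a monotone net of positive operators, and positivity of $k/l$ alone controls only the limiting quadratic form of $I-C_N$, not the norms $\|C_N\|$. The product-kernel factorization $k = l\cdot s$ together with the co-invariance of the range of $V$ is exactly what converts the manifest convergence of the $\bfM_{\bm z}^{(l)}\otimes I$ series (guaranteed by admissibility of $l$) into strong convergence of the $\bfM_{\bm z}^{(k)}$ series, which is why I would route the argument through the tensor-product space rather than attempt a direct summation estimate.
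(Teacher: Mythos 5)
Your proof is correct, and while your forward direction is exactly the paper's argument, your converse takes a genuinely different route. The paper's entire proof consists of computing the quadratic form of $I - \sum_{\alpha} b_{\alpha}^{(l)} \bfM_{\bm z}^{(k)\alpha}(\bfM_{\bm z}^{(k)\alpha})^{*}$ on the total set of kernel functions, exactly as in your first display, and asserting that positivity of this form is equivalent to positive semi-definiteness of $k/l$; it runs this equivalence in both directions and never separately addresses the strong operator convergence of the series (nor the two-sided norm bound making the limit a contraction), which the definition of a $1/l$-contraction demands. You isolated precisely this point: since $l$ need not be a CNP kernel, the coefficients $b_{\alpha}^{(l)}$ may be negative, so positivity of $k/l$ controls only the limiting quadratic form, not the existence of an SOT limit or the norms of the partial sums. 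Your detour --- setting $s := k/l$, embedding $H_k$ into $H_l \otimes H_s$ isometrically by $k_{\bm w} \mapsto l_{\bm w}\otimes s_{\bm w}$, observing that ${\rm Ran}\, V$ is co-invariant for $\bfM_{\bm z}^{(l)} \otimes I$, and invoking admissibility of $l$ together with stability of the $1/l$-contraction property under compression to co-invariant subspaces --- imports the convergence from $\bfM_{\bm z}^{(l)}$, where admissibility guarantees it. This is the same dilation-style mechanism the paper itself deploys later (in the proof of Lemma \ref{k implies s}), so your argument fits naturally into the paper's toolkit; what it buys is a complete treatment of the convergence question that the paper's two-line proof glosses over, at the cost of extra length and of the (standard, but worth stating) facts that SOT convergence of a bounded sequence survives tensoring with the identity and that compressions of contractions are contractions.
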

	\begin{proof}
		Given that the kernel $l$ is non-vanishing on $\mathbb{B}_{d} \times \mathbb{B}_{d}$, the equality \eqref{bn} holds for all $\bm{z}, \bm{w} \in \mathbb{B}_{d}$. Since the set of kernel function is total, for showing that the operator 
		$$ I - \sum\limits_{\alpha\in\mathbb{Z}^{d}_{+} \backslash \{0\}} b_{\alpha}^{(l)} \bfM_{\bm{z}}^{(k)\alpha} (\bfM_{\bm{z}}^{(k)\alpha})^{*} $$
	   is positive, it is enough to test on linear combinations of kernel functions. But this is equivalent to the fact that the function 
	   $$  \frac{k(\bm z, \bm w)}{l(\bm z, \bm w)} =  \left( 1 - \sum\limits_{\alpha\in\mathbb{Z}^{d}_{+} \backslash \{0\}} b_{\alpha}^{(l)} \bm{z}^{\alpha} \overline{\bm{w}}^{\alpha} \right) k(\bm{z},\bm{w} )$$
		is positive semi-definite on $\mathbb{B}_{d} \times \mathbb{B}_{d}$.
	\end{proof}
	
	The proof of the following theorem can be obtained using Lemma \ref{div} as a key tool. The overall approach is similar to the proof presented in Theorem 3.2 of \cite {BJ}.
	\begin{thm} \label{ksg}
		If $k = sg,$ where $s$ is a unitarily invariant CNP kernel and $g$ is a positive kernel, then $k$ admits a characteristic function through the kernel $s$.
	\end{thm}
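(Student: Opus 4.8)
The plan is to reduce the statement to Theorem \ref{characterization2} and then verify its hypothesis with the help of Lemma \ref{div}. By definition, saying that $k$ admits a characteristic function through $s$ means that \emph{every} pure $1/k$-contraction does (and here $k$ and $s$ are admissible, as the framework requires). So I would fix an arbitrary pure $1/k$-contraction $\bfT$ on a Hilbert space $\cH$ and, invoking Theorem \ref{characterization2} with $l = s$, reduce the problem to showing that the associated tuple $\bfB_{\bfT}$, acting on ${\rm Ker}\,V_{\bfT}^{*}$, is a $1/s$-contraction.

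First I would establish that the model tuple itself is a $1/s$-contraction. Since $k = sg$ with $g$ a positive kernel, the quotient $k/s = g$ is positive semi-definite, and $s$, being an irreducible CNP kernel, is non-vanishing on $\mathbb{B}_{d} \times \mathbb{B}_{d}$. Hence Lemma \ref{div}, applied with $l = s$, yields that $\bfM_{\bm z}^{(k)}$ is a $1/s$-contraction on $H_{k}$. Tensoring with the identity on $\overline{\rm Ran}\,\Delta_{\bfT}$ preserves this, because $\sum_{\alpha \neq 0} b_{\alpha}^{(s)} (\bfM_{\bm z}^{(k)} \otimes I)^{\alpha}\big((\bfM_{\bm z}^{(k)} \otimes I)^{\alpha}\big)^{*} = \big(\sum_{\alpha \neq 0} b_{\alpha}^{(s)} \bfM_{\bm z}^{(k)\alpha}(\bfM_{\bm z}^{(k)\alpha})^{*}\big) \otimes I$, so the partial sums converge strongly to a contraction; thus $\bfM_{\bm z}^{(k)} \otimes I$ is again a $1/s$-contraction.

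Next I would identify $\bfB_{\bfT}$ as the restriction of $\bfM_{\bm z}^{(k)} \otimes I$ to an invariant subspace. Taking adjoints in the intertwining relation $V_{\bfT}^{*}(M_{z_{i}}^{(k)} \otimes I) = T_{i} V_{\bfT}^{*}$ of Theorem \ref{V_T} gives $(M_{z_{i}}^{(k)*} \otimes I) V_{\bfT} = V_{\bfT} T_{i}^{*}$, so ${\rm Ran}\,V_{\bfT}$ is invariant under each $M_{z_{i}}^{(k)*} \otimes I$; equivalently, ${\rm Ker}\,V_{\bfT}^{*} = ({\rm Ran}\,V_{\bfT})^{\perp}$ is invariant under each $M_{z_{i}}^{(k)} \otimes I$. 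Therefore $\bfB_{\bfT}$ is precisely the restriction $(\bfM_{\bm z}^{(k)} \otimes I)|_{{\rm Ker}\,V_{\bfT}^{*}}$ of a $1/s$-contraction to an invariant subspace.

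The crux is then to show that the $1/s$-contraction property survives this restriction, and this is exactly where the CNP hypothesis on $s$ is used. Writing $\bfS = \bfM_{\bm z}^{(k)} \otimes I$, $\cM = {\rm Ker}\,V_{\bfT}^{*}$, $P = P_{\cM}$ and $\bfB = \bfB_{\bfT}$, invariance gives $B^{\alpha} = S^{\alpha}|_{\cM}$ and hence $(B^{\alpha})^{*} = P(S^{\alpha})^{*}|_{\cM}$, so that $\|(B^{\alpha})^{*}h\| \le \|(S^{\alpha})^{*}h\|$ for $h \in \cM$. Because $s$ is CNP, the coefficients $b_{n}^{(s)}$ are non-negative, whence $b_{\alpha}^{(s)} \ge 0$, and therefore for every $h \in \cM$,
\[
\sum_{\alpha \neq 0} b_{\alpha}^{(s)} \|(B^{\alpha})^{*}h\|^{2} \le \sum_{\alpha \neq 0} b_{\alpha}^{(s)} \|(S^{\alpha})^{*}h\|^{2} = \Big\langle \sum_{\alpha \neq 0} b_{\alpha}^{(s)} S^{\alpha}(S^{\alpha})^{*}h, h \Big\rangle \le \|h\|^{2}.
\]
The non-negativity of the $b_{\alpha}^{(s)}$ also makes the partial sums of $\sum_{\alpha \neq 0} b_{\alpha}^{(s)} B^{\alpha}(B^{\alpha})^{*}$ an increasing net of positive operators bounded above by $I$, so the series converges strongly to a contraction, making $\bfB_{\bfT}$ a $1/s$-contraction; Theorem \ref{characterization2} then finishes the proof. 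The main obstacle is this last step: without the non-negativity of the $b_{\alpha}^{(s)}$ — that is, without $s$ being a CNP kernel — neither the domination of the quadratic forms nor the monotone strong convergence of the partial sums would be available, and the restriction of a $1/s$-contraction to an invariant subspace could fail to remain one.
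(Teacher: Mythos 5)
Your proposal is correct and takes essentially the same route the paper intends: the paper's proof of Theorem \ref{ksg} is only sketched, but it explicitly rests on Lemma \ref{div} plus the approach of Theorem 3.2 of \cite{BJ}, which is exactly your reduction via Theorem \ref{characterization2} followed by the restriction-to-an-invariant-subspace argument using the non-negativity of the coefficients $b_{\alpha}^{(s)}$ coming from the CNP property. You have simply filled in the details the paper leaves to the reader, and the details are sound.
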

	
	Theorem \ref{ksg} naturally leads to the question of whether $k$ admits a characteristic function through a kernel other than $s$ as well. In the case when $s$ is the Drury-Arveson kernel and $k = s^m$ for $m = 2, 3, \ldots$, we have the answer.
	
	\begin{thm}
		If the kernel $k_{m}$ admits a characteristic function through the kernel $k_{n}$ then $n=1.$
	\end{thm}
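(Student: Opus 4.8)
The plan is to translate the statement, via Theorem \ref{characterization2}, into a question about associated tuples and then prove its contrapositive: for every integer $n\ge 2$ I would exhibit a single pure $1/k_m$-contraction $\bfT$ whose associated tuple $\bfB_{\bfT}$ fails to be a $1/k_n$-contraction. The starting observation is that $\bfB_{\bfT}=(\bfM_{\bm z}^{(k_m)}\otimes I)|_{{\rm Ker}\,V_{\bfT}^{*}}$ and that ${\rm Ker}\,V_{\bfT}^{*}=({\rm Ran}\,V_{\bfT})^{\perp}$ is invariant for $\bfM_{\bm z}^{(k_m)}\otimes I$, so every associated tuple is a restriction of the multiplication tuple to an invariant subspace. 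I would produce such subspaces from the simplest source: fix a positive integer $N$, let $\cH\subseteq H_{k_m}$ be the finite-dimensional (hence co-invariant) space of polynomials of total degree $<N$, and let $\bfT=(P_{\cH}M_{z_1}^{(k_m)}|_{\cH},\dots,P_{\cH}M_{z_d}^{(k_m)}|_{\cH})$ be the compression of $\bfM_{\bm z}^{(k_m)}$ to $\cH$. Since $\bfM_{\bm z}^{(k_m)}$ is a pure $1/k_m$-contraction and compressions to co-invariant subspaces preserve this property, $\bfT$ is a pure $1/k_m$-contraction.

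The first thing to nail down is that this $\bfT$ realizes the restriction I want, i.e. that ${\rm Ker}\,V_{\bfT}^{*}=\cM_N:=\overline{\rm Ran}\,(\text{inclusion of }\cH)^{\perp}=\overline{\rm span}\{\bm z^{\alpha}:|\alpha|\ge N\}$ and hence $\bfB_{\bfT}=\bfM_{\bm z}^{(k_m)}|_{\cM_N}$. The key input is that $\Delta_{\bfM_{\bm z}}=E_0$ is the projection onto the constants. Because $M_{z_i}^{*}$ sends each monomial to a single lower monomial and $\cH$ is spanned by monomials of degree $<N$, a direct check shows $\bfT^{\alpha}(\bfT^{\alpha})^{*}$ agrees with $(\bfM_{\bm z}^{(k_m)})^{\alpha}((\bfM_{\bm z}^{(k_m)})^{\alpha})^{*}$ on all of $\cH$, so $\Delta_{\bfT}^{2}=E_0|_{\cH}$ and $\overline{\rm Ran}\,\Delta_{\bfT}$ is the one-dimensional space of constants. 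Feeding $\Delta_{\bfT}=E_0|_{\cH}$ and $\bfT^{\alpha}\mathbf 1=\bm z^{\alpha}$ (for $|\alpha|<N$) into the formula of Theorem \ref{V_T} collapses $V_{\bfT}h$ to $\big(\sum_{|\alpha|<N}a_{\alpha}^{(k_m)}\langle h,\bm z^{\alpha}\rangle\bm z^{\alpha}\big)\otimes\mathbf 1=h\otimes\mathbf 1$, so $V_{\bfT}$ is the inclusion $\cH\hookrightarrow H_{k_m}$. Thus ${\rm Ran}\,V_{\bfT}=\cH$ and ${\rm Ker}\,V_{\bfT}^{*}=\cM_N$, as desired.

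With $\bfB_{\bfT}=\bfM_{\bm z}^{(k_m)}|_{\cM_N}$ in hand I would test positivity of $I-\sum_{\alpha\in\mathbb{Z}^{d}_{+}\setminus\{0\}}b_{\alpha}^{(k_n)}\bfB_{\bfT}^{\alpha}(\bfB_{\bfT}^{\alpha})^{*}$ against the single monomial $z_1^{N+1}$. Writing $a_{j}=\binom{m+j-1}{j}$ for the coefficients of $k_m$, only $\alpha=(1,0,\dots,0)$ contributes: any $\alpha$ with a nonzero entry off the first slot annihilates $z_1^{N+1}$, and any pure power $(p,0,\dots,0)$ with $p\ge 2$ produces $z_1^{N+1-p}\notin\cM_N$, which the compression kills. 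Using $(M_{z_1}^{(k_m)})^{*}z_1^{N+1}=(a_N/a_{N+1})z_1^{N}$ and $b_{(1,0,\dots,0)}^{(k_n)}=n$, the tested quantity becomes
\begin{equation*}
\Big\langle \big(I-\!\!\sum_{\alpha\in\mathbb{Z}^{d}_{+}\setminus\{0\}}\!\!b_{\alpha}^{(k_n)}\bfB_{\bfT}^{\alpha}(\bfB_{\bfT}^{\alpha})^{*}\big)z_1^{N+1},\,z_1^{N+1}\Big\rangle=\frac{a_{N+1}-n\,a_{N}}{a_{N+1}^{2}}.
\end{equation*}
Since $a_{N+1}/a_N=(N+m)/(N+1)\to 1$ as $N\to\infty$ while $n\ge 2$, this is strictly negative once $N$ is large enough (explicitly, once $N>(m-n)/(n-1)$). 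Hence $\bfB_{\bfT}$ is not a $1/k_n$-contraction, so by Theorem \ref{characterization2} the kernel $k_m$ does not admit a characteristic function through $k_n$ for any $n\ge 2$; as it does admit one through $k_1$ by Theorem \ref{ksg}, I conclude $n=1$.

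The step requiring the most care is the identification $\bfB_{\bfT}=\bfM_{\bm z}^{(k_m)}|_{\cM_N}$, i.e. confirming through the realization formula that the restricted shift genuinely occurs as an associated tuple rather than merely resembling one. The conceptual crux, however, is elsewhere: by Lemma \ref{div} the full tuple $\bfM_{\bm z}^{(k_m)}$ is itself a $1/k_n$-contraction for every $n\le m$ (because $k_m/k_n=k_{m-n}$ is positive), so passing to the proper invariant subspaces $\cM_N$ is genuinely necessary, and it is precisely the coefficient asymptotics $a_{N+1}/a_N\to 1$ that forces the bottom-adjacent defect negative and defeats every $k_n$ with $n\ge 2$ at once.
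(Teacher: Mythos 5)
Your proposal is correct and follows essentially the same route as the paper: both compress $\bfM_{\bm z}^{(k_m)}$ to the finite-dimensional co-invariant space of polynomials of bounded degree, identify $\ker V_{\bfT}^{*}$ as the span of high-degree monomials so that the associated tuple is the restricted shift, invoke Theorem \ref{characterization2}, and test positivity on a pure power of $z_1$ one degree above the cut, where the ratio $a_{N+1}/a_N \to 1$ forces $n=1$. The only differences are cosmetic (your degree cutoff is $<N$ versus the paper's $\le N$, and you spell out the verification of $\Delta_{\bfT}=E_0|_{\cH}$ and $V_{\bfT}=$ inclusion, which the paper leaves as ``easy to check'').
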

	\begin{proof}
		The expression for the generalized Bergman kernel $k_{m}$ is given by
		$$k_m(\bm{z}, \bm{w}) = \sum\limits_{\alpha\in\mathbb{Z}^{d}_{+}} \sigma_{m}(\alpha) \bm{z}^{\alpha} \overline{\bm{w}^{\alpha}}, \quad \bm{z} , \bm{w} \in \mathbb{B}_{d},$$
		where
		$$ \sigma_{m}(\alpha) = \frac{(m+ |\alpha| -1) !}{\alpha! (m-1)!},	\quad \alpha\in\mathbb{Z}^{d}_{+}.$$
		Recall that $\{ e_{m}(\alpha) = \sqrt{\sigma_{m}(\alpha)} \bm{z}^{\alpha} : \alpha\in\mathbb{Z}^{d}_{+}\}$ is an orthonormal basis. For $N \geq 0,$ the subspace $\cH_{N} $ is defined as the span of monomials $\bm{z}^{\alpha}$ with $|\alpha| \leq N$.  We define an operator tuple $\bfT_{N}$ acting on $\cH_{N}$ as $\bfT_{N} = P_{\cH_{N}} \bfM_{\bm{z}}^{(k_{m})}|_{\cH_{N}},$ where $P_{\cH_{N}}$ is the projection onto $\cH_{N}.$ It is easy to check that the $d-$tuple $\bfT_{N}$ is a pure $1/k_{m}-$contraction, $\Delta_{\bfT_{N}} = E_0$ and $$\cM_{N} \bydef {\rm Ker} V_{\bfT_{N}}^{*} = \overline{\rm span}\{ \bm{z}^{\alpha} : |\alpha| \geq N+1\}.$$ Let $P_{\cM_{N}}$ be the projection onto $\cM_{N}.$ Let $\bm n$ denote the multi-index $(n,0, \dots, 0).$
		
		Since the kernel $k_{m}$ admits a characteristic function through the kernel $k_{n}$, we can apply Theorem \ref{characterization2} to obtain that the operator tuple $\bfM_{\bm{z}}^{(k_{m})}|{\cM_{N}}$ is a $1/k_{n}$-contraction.  This implies that for all $N \geq 0,$ we have
		\begin{equation} \label{ineq} \Big\langle \Big(I_{\cM_{N}} - \sum\limits_{\alpha\in\mathbb{Z}^{d}_{+} \backslash \{0\}} b_{\alpha}^{(n)}\bfM_{\bm{z}}^{(k_{m}) \alpha} P_{\cM_{N}} (\bfM_{\bm{z}}^{(k_{m})\alpha})^{*} \Big) e_{m}(\bm{N+2}), e_{m}(\bm{N+2}) \Big\rangle \geq 0.
		\end{equation}
		The values of $b_{\alpha}^{(m)}$ for the generalized Bergman kernel $k_m$ are given by:
		$$b_{\alpha}^{(m)} = \begin{cases} (-1)^{|\alpha| +1} \binom{m}{|\alpha|} \binom{|\alpha|}{\alpha}, & 1 \leq |\alpha| \leq m \\ 0,  & |\alpha| > m \end{cases}.$$
		The inequality in \eqref{ineq} now implies that for all $N \geq 0,$ we have
		$$ 1 - b_{\bm{1}}^{(n)} \frac{\sigma_{m}(\bm{N+1})}{\sigma_{m}(\bm{N+2})} \geq 0.$$
		In other words,
		$$1 - n \frac{(m+N)!}{(m-1)! (N+1)!} \frac{(m-1)! (N+2)!}{ (m+N+1)!} \geq 0.$$ After simplification we get
		$$ n \leq \frac{N+m+1}{N+2}.$$ Since this holds for all $N\geq 0,$ we get that $n=1.$
	\end{proof}
	
	If the kernel $k$ admits two factorizations, i.e., $ k = s_{1} g_{1} = s_{2} g_{2} $
	where $s_{1}$ and $s_{2}$ are unitarily invariant CNP kernels, and $g_{1}$ and $g_{2}$ are positive semi-definite kernels, then every pure $1/k-$contraction admits a characteristic function through $s_{1}$ and another through $s_{2}$. Is there a relation between them as in Theorem 4.2 of \cite{McT}?
	\begin{thm}\label{s_1 = s_2}
		Suppose $k$ is an admissible kernel with two factorizations $k=s_{1}g_{1} = s_{2} g_{2}$ as above. Let $\bfT = (T_{1}, \dots, T_{d})$ be a non-trivial pure $1/k-$contraction and $V_{\bfT}$ be the isometry defined in Theorem \ref{V_T}. Let $\theta_{1} \in {\rm Mult}(H_{s_{1}} \otimes \cE_{1}, H_{k} \otimes \overline{\rm Ran} \Delta_{\bfT})$ and $\theta_{2} \in {\rm Mult}(H_{s_{2}} \otimes \cE_{2}, H_{k} \otimes \overline{\rm Ran} \Delta_{\bfT})$ be two partially isometric multipliers such that
		$$ I - V_{\bfT} V_{\bfT}^{*} = M_{\theta_{i}} M_{\theta_{i}}^{*}, \quad i =1,2. $$
		Then there exists a partial isometry $V_m : H_{s_{1}} \otimes \cE_{1} \to H_{s_{2}} \otimes \cE_{2}$ such that
		$$ M_{\theta_{1}} = M_{\theta_{2}} V_m \quad \text{and} \quad M_{\theta_{2}} = M_{\theta_{1}}V_m^{*} $$
		Moreover, there exists a partial isometry $V_f : \cE_{1} \to \cE_{2}$ such that
		$$ \theta_{1}(\bm z) = \theta_{2}(\bm z) V_f \quad \text{and} \quad \theta_{2}(\bm z) = \theta_{1}(\bm z)V_f^{*} $$
		for all $\bm z\in\mathbb{B}_{d}$ if and only if $s_{1} = s_{2}.$
	\end{thm}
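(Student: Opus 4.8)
The plan is to funnel both conclusions through a single pointwise identity that compares the two factorizations of the projection $P \bydef I - V_{\bfT}V_{\bfT}^{*}$, where $V_{\bfT}$ is the isometry of Theorem \ref{V_T}. First I would evaluate $P$ on reproducing kernel functions. Using the standard multiplier adjoint formula $M_{\theta_i}^{*}(k_{\bm w} \otimes \eta) = (s_i)_{\bm w} \otimes \theta_i(\bm w)^{*}\eta$ (valid because the domain of $M_{\theta_i}$ is $H_{s_i} \otimes \cE_i$), the two hypotheses $P = M_{\theta_1}M_{\theta_1}^{*} = M_{\theta_2}M_{\theta_2}^{*}$ yield, after pairing $M_{\theta_i}M_{\theta_i}^{*}(k_{\bm w}\otimes\eta')$ against $k_{\bm v}\otimes\eta$ and using the reproducing property $\la (s_i)_{\bm w}, (s_i)_{\bm v}\ra = s_i(\bm v,\bm w)$,
\begin{equation*}
s_1(\bm v, \bm w)\,\theta_1(\bm v)\theta_1(\bm w)^{*} = s_2(\bm v, \bm w)\,\theta_2(\bm v)\theta_2(\bm w)^{*} \qquad (\bm v, \bm w \in \mathbb{B}_d). \tag{$\ast$}
\end{equation*}
This identity is the engine of the whole argument.

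For the existence of $V_m$ I would simply set $V_m = M_{\theta_2}^{*}M_{\theta_1}$. Since $\theta_1,\theta_2$ are partially isometric multipliers with $M_{\theta_1}M_{\theta_1}^{*} = M_{\theta_2}M_{\theta_2}^{*} = P$, the operator $P$ is the orthogonal projection onto the common range ${\rm Ran}\,M_{\theta_1} = {\rm Ran}\,M_{\theta_2}$, so $PM_{\theta_1} = M_{\theta_1}$ and $PM_{\theta_2}=M_{\theta_2}$. A direct computation then gives $M_{\theta_2}V_m = M_{\theta_2}M_{\theta_2}^{*}M_{\theta_1} = PM_{\theta_1} = M_{\theta_1}$ and symmetrically $M_{\theta_1}V_m^{*} = PM_{\theta_2} = M_{\theta_2}$, while $V_m^{*}V_m = M_{\theta_1}^{*}PM_{\theta_1} = M_{\theta_1}^{*}M_{\theta_1}$ is a projection, so $V_m$ is a partial isometry. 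This disposes of the first assertion and needs no appeal to $(\ast)$.

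For the ``if'' half of the equivalence, suppose $s_1 = s_2 = s$. Being an irreducible unitarily invariant CNP kernel, $s$ is non-vanishing on $\mathbb{B}_d \times \mathbb{B}_d$, so $(\ast)$ collapses to $\theta_1(\bm v)\theta_1(\bm w)^{*} = \theta_2(\bm v)\theta_2(\bm w)^{*}$ for all $\bm v,\bm w$. I would then invoke the classical coincidence lemma: this identity shows that $\theta_2(\bm w)^{*}\eta \mapsto \theta_1(\bm w)^{*}\eta$ is well defined and isometric on $\overline{\rm span}\{\theta_2(\bm w)^{*}\eta : \bm w\in\mathbb{B}_d,\ \eta\in\overline{\rm Ran}\,\Delta_{\bfT}\} \subseteq \cE_2$; extending it by zero on the orthogonal complement produces a partial isometry $W:\cE_2\to\cE_1$ with $W\theta_2(\bm w)^{*} = \theta_1(\bm w)^{*}$ and $W^{*}\theta_1(\bm w)^{*} = \theta_2(\bm w)^{*}$. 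Setting $V_f = W^{*}$ and taking adjoints gives $\theta_1(\bm z) = \theta_2(\bm z)V_f$ and $\theta_2(\bm z) = \theta_1(\bm z)V_f^{*}$ for all $\bm z$.

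For the ``only if'' half, assume such a constant partial isometry $V_f$ exists. Substituting $\theta_1 = \theta_2 V_f$ and $\theta_2 = \theta_1 V_f^{*}$ and using $(V_f^{*}V_f)^{2} = V_f^{*}V_f$, one checks $\theta_2(\bm v)\theta_2(\bm w)^{*} = \theta_1(\bm v)\theta_1(\bm w)^{*}$, so $(\ast)$ becomes $\big(s_1(\bm v,\bm w) - s_2(\bm v,\bm w)\big)\theta_1(\bm v)\theta_1(\bm w)^{*} = 0$. Putting $\bm v = \bm w$ gives $\big(s_1(\bm w,\bm w) - s_2(\bm w,\bm w)\big)\theta_1(\bm w)\theta_1(\bm w)^{*} = 0$; since $\bfT$ is non-trivial we have $P\neq 0$, hence $M_{\theta_1}\neq 0$ and $\theta_1\not\equiv 0$, so the set where $\theta_1(\bm w)\neq 0$ (equivalently $\theta_1(\bm w)\theta_1(\bm w)^{*}\neq 0$) is open and dense. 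There $s_1(\bm w,\bm w) = s_2(\bm w,\bm w)$, hence everywhere by continuity; writing $s_i(\bm w,\bm w) = \sum_{n} a_n^{(s_i)}\|\bm w\|^{2n}$ and applying the identity theorem for power series in $\|\bm w\|^{2}\in[0,1)$ forces $a_n^{(s_1)} = a_n^{(s_2)}$ for all $n$, i.e. $s_1 = s_2$. The step I expect to demand the most care is precisely this last passage from a generically valid scalar equality to a global one, where both the non-triviality of $\bfT$ and the analyticity of $\theta_1$ are indispensable; the other delicate point is extracting the clean pointwise identity $(\ast)$ from the operator factorizations while correctly bookkeeping the two distinct domain kernels $s_1$ and $s_2$.
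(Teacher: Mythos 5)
Your proposal is correct, and for most of the theorem it follows the paper's own route: your identity $(\ast)$ is exactly the paper's starting relation (which the paper writes as $\la s_{1 \bm w} \otimes \theta_{1}(\bm w)^{*} \eta , s_{1 \bm z} \otimes \theta_{1}(\bm z)^{*} \xi \ra = \la s_{2 \bm w} \otimes \theta_{2}(\bm w)^{*} \eta , s_{2 \bm z} \otimes \theta_{2}(\bm z)^{*} \xi \ra$), and your ``only if'' argument is essentially verbatim the paper's: use $\theta_{1}(\bm z)V_f^{*}V_f = \theta_{1}(\bm z)$ to reduce to $\left(s_{1}(\bm z,\bm z)-s_{2}(\bm z,\bm z)\right)\|\theta_{1}(\bm z)^{*}\eta\|^{2}=0$, rule out an open subset in the zero set of $\theta_{1}$ by analyticity and non-triviality, then conclude by density, continuity and matching power-series coefficients; you also fill in the ``if'' direction with the standard coincidence argument, which the paper dismisses as straightforward with a citation to Proposition 2.3 of \cite{CHS}. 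Where you genuinely diverge is the construction of $V_m$: the paper defines it on the span of the vectors $s_{1\bm z}\otimes\theta_{1}(\bm z)^{*}\eta$ by sending them to $s_{2\bm z}\otimes\theta_{2}(\bm z)^{*}\eta$ (using $(\ast)$ for well-definedness and isometry) and extends by zero, whereas you set $V_m \bydef M_{\theta_{2}}^{*}M_{\theta_{1}}$ and verify everything from the algebra of partial isometries ($WW^{*}W=W$, and $V_m^{*}V_m = M_{\theta_{1}}^{*}M_{\theta_{1}}$ is a projection). Your version is cleaner and makes explicit that this half of the theorem needs neither kernel functions nor $(\ast)$ — it is a purely operator-theoretic fact about two partial isometries with the same range projection — and in fact the two constructions produce the same operator, since both annihilate $({\rm Ran}\, M_{\theta_{1}}^{*})^{\perp}$ and agree on ${\rm Ran}\, M_{\theta_{1}}^{*}$. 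What the paper's construction buys is the explicit action of $V_m$ on the generating vectors, which sets up the intended contrast with the function-level map $V_f$: a partial isometry of this kind always exists at the multiplier level but exists at the function level only when $s_{1}=s_{2}$. One minor remark: you read ``non-trivial'' as $I - V_{\bfT}V_{\bfT}^{*} \neq 0$, while the paper defines it as $\bfT \not\cong \bfM_{\bm z}^{(k)}$ and passes through ``$\theta_{1}\equiv 0$ implies $V_{\bfT}$ unitary''; the two usages agree up to the same harmless identification the paper itself makes, so this is not a gap.
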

	
	\begin{proof}
		The word non-trivial in the statement means that $\bfT $ is not unitarily equivalent to $\bfM_{\bm z}^{(k)}.$ We have
		$$ M_{\theta_{1}} M_{\theta_{1}}^{*} = M_{\theta_{2}} M_{\theta_{2}}^{*}.$$
		This implies that
		$$ \la s_{1 \bm w} \otimes \theta_{1}(\bm w)^{*} \eta , s_{1 \bm z} \otimes \theta_{1}(\bm z)^{*} \xi \ra  = \la s_{2 \bm w} \otimes \theta_{2}(\bm w)^{*} \eta , s_{2 \bm z} \otimes \theta_{2}(\bm z)^{*} \xi \ra$$
		for all $\bm z, \bm w \in \mathbb{B}_{d}$ and $\eta, \xi \in \overline{\rm Ran} \Delta_{\bfT}.$ Hence there exists a unitary operator
		$$V_m: \bigvee\limits_{\substack {   \xi \in \overline{\rm Ran} \Delta_{\bfT} \\ \bm z\in \mathbb{B}_{d} }} s_{1 \bm z} \otimes \theta_{1}(\bm z)^{*} \eta \to \bigvee\limits_{\substack { \xi \in \overline{\rm Ran} \Delta_{\bfT}  \\ \bm z\in \mathbb{B}_{d} }} s_{2 \bm z} \otimes \theta_{2}(\bm z)^{*} \eta$$
		given by
		$$ s_{1 \bm z} \otimes \theta_{1}(\bm z)^{*} \eta \mapsto  s_{2 \bm z} \otimes \theta_{2}(\bm z)^{*} \eta$$
		where $\bigvee$ denotes the closed linear span. Extending $V_m$ by zero on the orthogonal complement, we obtain a partial isometry from $H_{s_{1}} \otimes \cE_{1}$ into $H_{s_{2}} \otimes \cE_{2}$, which we continue to denote by $V_m.$ The partial isometry $V_m$ satisfies
		$$ M_{\theta_{1}}^{*} = V_m^{*} M_{\theta_{2}}^{*}  \quad \text{and} \quad  M_{\theta_{2}}^{*} =  V_m M_{\theta_{1}}^{*}.$$
		
		For the moreover part, it is straightforward to see that $s_1=s_2$ would give such a $V_f$, see also Proposition 2.3 in \cite{CHS}. For the converse, by virtue of $M_{\theta_{1}} M_{\theta_{1}}^{*} = M_{\theta_{2}}M_{\theta_{2}}^{*},$ we have
		$$ s_{1}(\bm z, \bm z) \la \theta_{1}(\bm z)^{*} \eta, \theta_{1}(\bm z)^{*} \eta \ra = s_{2}(\bm z, \bm z) \la \theta_{2}(\bm z)^{*} \eta, \theta_{2}(\bm z)^{*} \eta \ra$$
		for all $\bm z\in \mathbb{B}_{d}$ and $\eta \in \overline{\rm Ran} \Delta_{\bfT}.$ Using $\theta_{2}(\bm z)^{*} = V_f \theta_{1}(\bm z)^{*}$ and $\theta_{1}(\bm z) V_f^{*} V_{f} = \theta_{1}(\bm z)$, we get that
		$$\left(s_{1} (\bm z, \bm z) - s_{2} (\bm z, \bm z) \right) \left\| \theta_{1}(\bm z)^{*} \eta \right\|^{2} = 0 $$
		for all $\bm z\in \mathbb{B}_{d}$ and $\eta \in \overline{\rm Ran} \Delta_{\bfT}.$ Suppose for some $\bm z_{0} \in \mathbb{B}_{d},$ we have $\theta_{1}(\bm z_{0})^{*} \eta = 0 $ for all $\eta \in \overline{\rm Ran} \Delta_{\bfT}.$ This implies that $\theta_{1}(\bm z_{0}) = 0.$  If the set $\{ \bm z \in \mathbb{B}_{d}; \quad \theta_{1}(\bm z) = 0\}$ contains an open subset of $\mathbb{B}_{d},$ then $\theta_{1}(\bm z) \equiv 0.$ This will imply that $V_{\bfT}$ is a unitary and hence $\bfT$ is unitarily equivalent to $\bfM_{\bm z}^{(k)}.$ Thus, if $\bfT$ is not unitarily equivalent to $\bfM_{\bm z}^{(k)},$ then the set $\{ \bm z \in \mathbb{B}_{d}; \theta_{1}(\bm z) = 0\}$ can not contain any open subset of $\mathbb{B}_{d}.$ This implies that the set $\{\bm z\in\mathbb{B}_{d}; s_{1}(\bm z ,\bm z) = s_{2}(\bm z, \bm z)\}$ is dense in $\mathbb{B}_{d}$ and hence $s_{1}(\bm z, \bm z) = s_{2}(\bm z, \bm z)$ for all $\bm z\in\mathbb{B}_{d}.$ Thus, $a_{n}^{(k_{1})} = a_{n}^{(k_{2})}$ for all $n \geq 0.$ This implies that $s_{1} = s_{2}.$
	\end{proof}
	A partial isometry $V_f$ as above at the function level would give rise to a partial isometry $V_m$ as above at the multiplier level. However, as we saw, the converse requires $s_1$ to be equal to $s_2$.
	
	\section{The Construction} \label{construction}
	In this section, we provide an explicit construction for the characteristic function associated with a $1/k-$contraction where $k$ is an admissible kernel that possesses a complete Nevanlinna-Pick (CNP) factor.
	\subsection{Kernels with a CNP factor}
	We shall be concerned with admissible kernels {\em $k$ which have a CNP factor}, i.e.,  $k=s \cdot g$ for a unitarily invariant CNP kernel $s$ and a positive kernel $g.$   The kernel $g$ is defined as follows:
	$$g(\bm{z},\bm{w})=\sum\limits_{n=0}^{\infty}a_{n}^{(g)} \langle \bm{z},\bm{w}\rangle^{n}, \quad \bm{z},\bm{w}\in\mathbb{B}_{d}$$
	for some sequence of non-negative coefficients $\{a_{n}^{(g)}\}_{n\geq 0}$ such that $a_{0}^{(g)} =1$. The coefficients of the kernel $k$ can be related to the coefficients of the CNP kernel $s$ and the positive kernel $g$ through the relationship:
	\begin{equation*}
		a_{n}^{(k)} = \sum\limits_{i=0}^{n} a_{n-i}^{(s)} a_{i}^{(g)}, \quad n \geq 0.
	\end{equation*}
	It follows that $a_{n}^{(s)} \leq a_{n}^{(k)}$ and $a_{n}^{(g)} \leq a_{n}^{(k)}.$ Therefore, if the power series $\sum_{n=0}^{\infty} a_{n}^{(k)} t^{n}$ has radius of convergence $r \geq 1,$ then both the power series $\sum_{n=0}^{\infty} a_{n}^{(s)} t^{n}$ and $\sum_{n=0}^{\infty}a_{n}^{(g)} t^{n}$ will also have radius of convergence greater than or equal to $r.$ In this section, $k$ is assumed to be an admissible kernel with a CNP factor $s$. We further assume that the power series $\sum_{n=0}^{\infty} a_{n}^{(s)} t^{n},$ associated with the CNP factor, has a radius of convergence $1.$ This implies that the power series $\sum_{n=0}^{\infty} a_{n}^{(k)} t^{n},$ associated with the kernel $k,$ also has a radius of convergence $1.$
	
	\begin{lemma}\label{spure}
		The operator $d-$tuple $\bfM_{\bm{z}}^{(k)} = (M_{z_{1}}^{(k)}, \dots, M_{z_{d}}^{(k)})$ acting on the reproducing kernel Hilbert space $H_{k}$ is a pure $1/s$-contraction.
	\end{lemma}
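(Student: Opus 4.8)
The plan is to prove the two assertions hidden in the statement separately: that $\bfM_{\bm{z}}^{(k)}$ is a $1/s$-contraction, and that it is pure. The first part I would dispose of immediately by invoking Lemma \ref{div} with $l=s$. Since $s$ is a unitarily invariant CNP kernel it is admissible and, being the kernel of an irreducible CNP space, non-vanishing on $\mathbb{B}_d\times\mathbb{B}_d$; moreover $k/s=g$ is a positive kernel by the standing hypothesis $k=sg$. Hence Lemma \ref{div} gives at once that $\bfM_{\bm{z}}^{(k)}$ is a $1/s$-contraction, so the defect $D:=\Delta_{\bfM_{\bm{z}}^{(k)}}^{2}=I-\sum_{\alpha\neq 0}b_{\alpha}^{(s)}\bfM_{\bm{z}}^{(k)\alpha}(\bfM_{\bm{z}}^{(k)\alpha})^{*}$ is a well-defined positive operator, and only purity remains.

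For purity the key observation is that every operator in sight is diagonal in the orthogonal monomial basis $\{\bm{z}^{\gamma}\}$ of $H_{k}$. Writing $\bfT=\bfM_{\bm{z}}^{(k)}$, one has $(\bfT^{\alpha})^{*}\bm{z}^{\gamma}=(a_{\gamma-\alpha}^{(k)}/a_{\gamma}^{(k)})\,\bm{z}^{\gamma-\alpha}$ for $\gamma\geq\alpha$ and $0$ otherwise, while $\bfT^{\alpha}\bm{z}^{\gamma-\alpha}=\bm{z}^{\gamma}$; thus $\bfT^{\beta}(\bfT^{\beta})^{*}$, hence $D$, and each $\bfT^{\alpha}D(\bfT^{\alpha})^{*}$ act diagonally. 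I would first read off the diagonal entry of $D$ at $\bm{z}^{\mu}$. Using the single-variable factorization $A_{k}(t)B_{s}(t)=A_{g}(t)$, where $A_{k},A_{g}$ have coefficients $a_{n}^{(k)},a_{n}^{(g)}$ and $B_{s}(t)=1-\sum_{n\geq 1}b_{n}^{(s)}t^{n}$, together with its multi-index form $a_{\mu}^{(k)}-\sum_{0\neq\beta\leq\mu}b_{\beta}^{(s)}a_{\mu-\beta}^{(k)}=a_{\mu}^{(g)}$, this entry works out to $a_{\mu}^{(g)}/a_{\mu}^{(k)}$.

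It then follows that the partial sum $P_{N}:=\sum_{|\alpha|\leq N}a_{\alpha}^{(s)}\bfT^{\alpha}D(\bfT^{\alpha})^{*}$ is diagonal, with entry at $\bm{z}^{\gamma}$ equal to $(a_{\gamma}^{(k)})^{-1}\sum_{|\alpha|\leq N,\ \alpha\leq\gamma}a_{\alpha}^{(s)}a_{\gamma-\alpha}^{(g)}$. By the convolution identity $a_{\gamma}^{(k)}=\sum_{\alpha\leq\gamma}a_{\alpha}^{(s)}a_{\gamma-\alpha}^{(g)}$, the multi-index form of $A_{k}=A_{s}A_{g}$ already recorded in the text, this entry is at most $1$ and equals $1$ as soon as $N\geq|\gamma|$. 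Consequently $0\leq P_{N}\leq I$ for every $N$, and for each $f=\sum_{\gamma}f_{\gamma}\bm{z}^{\gamma}$ the quantity $\|P_{N}f-f\|^{2}$ is dominated by the tail $\sum_{|\gamma|>N}|f_{\gamma}|^{2}\|\bm{z}^{\gamma}\|^{2}$, which tends to $0$. Hence $P_{N}\to I$ strongly, which is precisely the purity condition of Definition \ref{pure}.

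The only genuinely delicate point is the a priori strong-operator convergence of the purity series; the device that removes it is the diagonalization, which collapses the whole question to the two scalar identities coming from $A_{k}=A_{s}A_{g}$ and $A_{k}B_{s}=A_{g}$, and in fact makes the tail vanish exactly for $N\geq|\gamma|$ rather than merely asymptotically. The main bookkeeping step, which I expect to be the only mild obstacle, is verifying that these one-variable relations pass correctly to the multi-index coefficients $a_{\alpha}=a_{|\alpha|}\binom{|\alpha|}{\alpha}$; this is routine once one notes $\sum_{\alpha}a_{\alpha}\bm{x}^{\alpha}=A(x_{1}+\cdots+x_{d})$, so that multi-index convolution reduces to ordinary Cauchy convolution in the single variable $x_{1}+\cdots+x_{d}$.
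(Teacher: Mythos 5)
Your proposal is correct, and while the contraction half coincides with the paper's argument (both invoke Lemma \ref{div} with $l=s$, the hypotheses holding because $s$, being a unitarily invariant CNP kernel, is admissible and non-vanishing, and $k/s=g$ is positive), your purity argument takes a genuinely different route. The paper tests the series $\sum_{\alpha} a_{\alpha}^{(s)}\, \bfM_{\bm z}^{(k)\alpha}\, \Gamma^{2}\, (\bfM_{\bm z}^{(k)\alpha})^{*}$ against kernel functions: since $(\bfM_{\bm z}^{(k)\alpha})^{*}k_{\bm w}=\overline{\bm w^{\alpha}}\,k_{\bm w}$, one gets $\la \Gamma^{2}k_{\bm w},k_{\bm z}\ra = k(\bm z,\bm w)/s(\bm z,\bm w)=g(\bm z,\bm w)$, so the sesquilinear form of the series collapses to $s(\bm z,\bm w)\,g(\bm z,\bm w)=k(\bm z,\bm w)=\la k_{\bm w},k_{\bm z}\ra$, and totality of kernel functions identifies the limit as $I$. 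You instead diagonalize everything in the orthogonal monomial basis and compute the diagonal entries of the truncations $P_{N}$ explicitly, via the two coefficient identities coming from $k\cdot(1/s)=g$ and $k=s\cdot g$; both identities are also used in the paper (the first in the proof of Lemma \ref{Pi}, the second in the paragraph introducing the CNP factor), so the two proofs run on the same fuel but are organized around coefficients versus functions. What your route buys is a fully explicit treatment of the convergence issue: you obtain $0\leq P_{N}\leq I$ with diagonal entry at $\bm z^{\gamma}$ equal to $1$ exactly once $N\geq|\gamma|$, hence the clean tail bound $\|P_{N}f-f\|^{2}\leq\sum_{|\gamma|>N}|f_{\gamma}|^{2}\|\bm z^{\gamma}\|^{2}$, whereas the paper's computation is shorter and displays the function-theoretic mechanism $k=sg$ directly but leaves the strong convergence of the series to be supplied implicitly (by monotone convergence of the increasing, uniformly bounded sequence of positive partial sums). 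Your closing reduction of multi-index convolution to one-variable Cauchy convolution via $\sum_{\alpha}a_{\alpha}\bm x^{\alpha}=A(x_{1}+\cdots+x_{d})$ is exactly the right justification for transferring the scalar relations to the coefficients $a_{\alpha}=a_{|\alpha|}\binom{|\alpha|}{\alpha}$, and all the operator formulas you use (in particular $(\bfM_{\bm z}^{(k)\alpha})^{*}\bm z^{\gamma}=(a_{\gamma-\alpha}^{(k)}/a_{\gamma}^{(k)})\,\bm z^{\gamma-\alpha}$) check out.
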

	\begin{proof}
		As a consequence of Lemma \ref{div}, it follows that the operator $d$-tuple $\bfM_{\bm{z}}^{(k)}$ is a $1/s$-contraction. To prove that the $1/s$-contraction $\bfM_{\bm{z}}^{(k)}$ is pure, we aim to show that the series
		the series $$ \sum\limits_{\alpha\in\mathbb{Z}^{d}_{+}} a_{\alpha}^{(s)} \bfM_{\bm{z}}^{(k) \alpha} \Gamma_{\bfM_{\bm{z}}^{(k)}}^{2} (\bfM_{\bm{z}}^{(k) \alpha})^{*}$$ converges strongly to the identity operator on $H_{k},$ where
		$$\Gamma_{\bfM_{\bm{z}}^{(k)}}^{2} = I - \sum\limits_{\alpha\in\mathbb{Z}^{d}_{+} \backslash \{0\}} b_{\alpha}^{(s)} \bfM_{\bm{z}}^{(k)\alpha} (\bfM_{\bm{z}}^{(k)\alpha})^{*}.$$
		To show that $\bfM_{\bm z}^{(k)}$ is pure, consider the following expression for $\bm{z} , \bm{w} \in \mathbb{B}_{d}$:
		\begin{align*}
			\Big\la \Big( \sum\limits_{\alpha\in\mathbb{Z}^{d}_{+}} a_{\alpha}^{(s)} \bfM_{\bm{z}}^{(k) \alpha} \Gamma_{\bfM_{\bm{z}}^{(k)}}^{2} (\bfM_{\bm{z}}^{(k) \alpha})^{*} \Big) k_{\bm{w}} , k_{\bm{z}} \Big\ra & =   \sum\limits_{\alpha\in\mathbb{Z}^{d}_{+}} a_{\alpha}^{(s)}\bm{z}^{\alpha} \overline{\bm{w}}^{\alpha} \la \Gamma_{\bfM_{\bm{z}}^{(k)}}^{2} k_{\bm{w}} , k_{\bm{z}} \ra \\
			& = s(\bm{z}, \bm{w}) g(\bm{z}, \bm{w}) \\
			&= k(\bm{z}, \bm{w}) = \langle k_{\bm{w}}, k_{\bm{z}} \rangle.
		\end{align*}
		This proves the lemma.
	\end{proof}
	
	\begin{lemma}\label{k implies s}
		A pure $1/k-$ contraction is a pure $1/s-$contraction. 	
	\end{lemma}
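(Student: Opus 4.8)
The plan is to realize an arbitrary pure $1/k$-contraction as a compression of the model tuple to a co-invariant subspace, and then to invoke two facts already at our disposal: that $\bfM_{\bm z}^{(k)}$ is a pure $1/s$-contraction (Lemma \ref{spure}), and that compressions of pure $1/s$-contractions to co-invariant subspaces are again pure $1/s$-contractions. The latter is exactly the compression property recorded earlier for pure $1/k$-contractions; since a unitarily invariant CNP kernel $s$ is admissible, that property applies verbatim with $k$ replaced by $s$. A preliminary observation is that tensoring by an identity operator alters none of the defining series, so that $\bfM_{\bm z}^{(k)} \otimes I_{\overline{\rm Ran}\Delta_{\bfT}}$, acting on $H_{k} \otimes \overline{\rm Ran}\Delta_{\bfT}$, is also a pure $1/s$-contraction by Lemma \ref{spure}.

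Next I would let $\bfT$ be a pure $1/k$-contraction with associated isometry $V_{\bfT}\colon \cH \to H_{k} \otimes \overline{\rm Ran}\Delta_{\bfT}$ from Theorem \ref{V_T}. Taking adjoints in the intertwining relation $V_{\bfT}^{*}(M_{z_{i}}^{(k)} \otimes I) = T_{i} V_{\bfT}^{*}$ yields $(M_{z_{i}}^{(k)*} \otimes I)V_{\bfT} = V_{\bfT} T_{i}^{*}$, so the subspace $\cN := {\rm Ran}\, V_{\bfT}$ is invariant under every $M_{z_{i}}^{(k)*} \otimes I$; that is, $\cN$ is co-invariant for $\bfM_{\bm z}^{(k)} \otimes I$. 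Because $V_{\bfT}$ is an isometry onto $\cN$, the identity $V_{\bfT}^{*}(M_{z_{i}}^{(k)} \otimes I)V_{\bfT} = T_{i}$ exhibits $\bfT$ as unitarily equivalent, through $V_{\bfT}$, to the compression $P_{\cN}(\bfM_{\bm z}^{(k)} \otimes I)|_{\cN}$.

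Finally I would apply the compression property for pure $1/s$-contractions to the tuple $\bfM_{\bm z}^{(k)} \otimes I$ and its co-invariant subspace $\cN$: the compression $P_{\cN}(\bfM_{\bm z}^{(k)} \otimes I)|_{\cN}$ is a pure $1/s$-contraction, and hence so is the unitarily equivalent tuple $\bfT$. The step demanding the most care is the last one, namely confirming that the quoted compression property genuinely transfers to the kernel $s$ and to the tensored model; this rests on the admissibility of $s$ and on the fact, noted above, that $\bfM_{\bm z}^{(k)} \otimes I$ inherits both the $1/s$-contractivity and the purity of $\bfM_{\bm z}^{(k)}$. Everything else—the co-invariance of $\cN$ and the identification of $\bfT$ with the compression—is routine manipulation of the intertwining relation of Theorem \ref{V_T}.
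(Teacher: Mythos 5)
Your proposal is correct and follows essentially the same route as the paper's proof: use Theorem \ref{V_T} to realize $\bfT$ as (unitarily equivalent to) the compression of $\bfM_{\bm z}^{(k)} \otimes I_{\overline{\rm Ran}\Delta_{\bfT}}$ to a co-invariant subspace, note via Lemma \ref{spure} that this model tuple is a pure $1/s$-contraction, and conclude by the fact that compressions of pure $1/s$-contractions to co-invariant subspaces remain pure $1/s$-contractions. The only difference is that you spell out details the paper leaves implicit (the co-invariance of ${\rm Ran}\, V_{\bfT}$ from the intertwining relation, the harmlessness of tensoring with an identity, and the admissibility of $s$ needed to transfer the compression property), which is a faithful filling-in rather than a different argument.
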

	\begin{proof}
		Let $\bfT = (T_{1} ,\dots ,T_{d})$ be a pure $1/k$-contraction. By Theorem \ref{V_T}, $\bfT$ is unitarily equivalent to the compression of $\bfM_{\bm{z}}^{(k)} \otimes I_{\overline{\text{Ran}} \Delta_{\bfT}}$ to a co-invariant subspace. Since $\bfM_{\bm{z}}^{(k)} \otimes I_{\overline{\text{Ran}} \Delta_{\bfT}}$ is a pure $1/s$-contraction (by Lemma \ref{spure}), its compression to a co-invariant subspace is also a pure $1/s$-contraction. Therefore, $\bfT$ is a pure $1/s$-contraction.
	\end{proof}
	
	For a pure $1/k-$contraction $\bfT = (T_{1}, \dots , T_{d}),$ we define 
	\begin{equation}\label{Gamma_T}
	\Gamma_{\bfT} = \left( I - \sum_{\alpha\in\mathbb{Z}^{d}_{+} \backslash \{0\}} b_{\alpha}^{(s)} \bfT^{\alpha} (\bfT^{\alpha})^{*}\right)^{1/2}.
	\end{equation}
	
	\subsection{The Taylor joint spectrum and functional calculus}
	The Taylor joint spectrum $\sigma(\bfT)$ for a tuple of commuting bounded operators $\bfT = (T_{1}, \dots, T_{d})$ is a compact subset of $\mathbb{C}^{d}$ which arises out of a certain Koszul complex and carries nice properties like an analytic functional calculus, for example. We shall not go into the definition here but refer the reader to Definition 14 in \cite{Mue}.
	\begin{lemma}
		Let $\bfT = (T_{1}, \dots, T_{d})$ be a pure $1/k$-contraction. Then $\sigma(\bfT) \subseteq \overline{  \mathbb{B}_{d}}.$
	\end{lemma}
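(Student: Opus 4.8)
The plan is to show that the Taylor joint spectrum of a pure $1/k$-contraction $\bfT$ is contained in the closed unit ball $\overline{\mathbb{B}_{d}}$. The natural strategy is to reduce to the model operator. By Theorem \ref{V_T}, the isometry $V_{\bfT}$ intertwines $\bfT^{*}$ with $(M_{z_{i}}^{(k)} \otimes I)^{*}$, which exhibits $\bfT$ as (unitarily equivalent to) the compression of $\bfM_{\bm{z}}^{(k)} \otimes I_{\overline{\rm Ran}\Delta_{\bfT}}$ to the co-invariant subspace ${\rm Ran}\, V_{\bfT}$, exactly as used in the proof of Lemma \ref{k implies s}. Since the Taylor spectrum enjoys the spectral mapping property and behaves well under the relevant structural operations, I would first establish the spectral inclusion for the model tuple $\bfM_{\bm{z}}^{(k)}$ itself and then transfer the bound to a general $\bfT$ via this co-invariant compression.

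The first main step is therefore to prove $\sigma(\bfM_{\bm{z}}^{(k)}) \subseteq \overline{\mathbb{B}_{d}}$. For this I would argue that if $\bm{\lambda} \in \mathbb{C}^{d}$ satisfies $\|\bm{\lambda}\| > 1$, then the tuple $(M_{z_{1}}^{(k)} - \lambda_{1}, \dots, M_{z_{d}}^{(k)} - \lambda_{d})$ is jointly invertible in the Taylor sense, i.e. $\bm{\lambda}$ lies in the resolvent set. The admissibility hypothesis gives that each $M_{z_{i}}^{(k)}$ is bounded, with the joint operator norm controlled by $\sup_{n}(a_{n}^{(k)}/a_{n+1}^{(k)})$; combined with the standing assumption in this section that the power series $\sum_{n} a_{n}^{(k)} t^{n}$ has radius of convergence exactly $1$, one gets that the spectral radius of $\bfM_{\bm{z}}^{(k)}$ in the appropriate joint sense is $1$. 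Concretely, I would invoke the standard fact that for a commuting tuple the Taylor spectrum is contained in the closed ball of radius equal to the joint spectral radius, and use the radius-of-convergence normalization to pin this radius at $1$, giving $\sigma(\bfM_{\bm{z}}^{(k)}) \subseteq \overline{\mathbb{B}_{d}}$.

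The second step is the transfer to $\bfT$. Writing $N = \bfM_{\bm{z}}^{(k)} \otimes I_{\overline{\rm Ran}\Delta_{\bfT}}$, the tensoring with the identity does not enlarge the Taylor spectrum, so $\sigma(N) \subseteq \overline{\mathbb{B}_{d}}$ as well. Then $\bfT$ is the compression of $N$ to the co-invariant subspace $\cM = {\rm Ran}\, V_{\bfT}$ (equivalently, $N^{*}\cM \subseteq \cM$). For such a compression there is a standard spectral inclusion: the Taylor spectrum of the restriction of $N^{*}$ to an invariant subspace, and dually the compression of $N$ to a co-invariant subspace, is contained in $\sigma(N)$. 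Invoking this — it follows from the projection property of the Koszul complex under restriction to invariant subspaces, as recorded in the functional-calculus framework of \cite{Mue} — yields $\sigma(\bfT) \subseteq \sigma(N) \subseteq \overline{\mathbb{B}_{d}}$.

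The step I expect to be the main obstacle is the second one: the behaviour of the Taylor joint spectrum under compression to a co-invariant subspace is more delicate than in the single-operator case, since the Koszul complex does not split as conveniently as the scalar resolvent. I would need the precise statement that restriction to a (co-)invariant subspace can only shrink the Taylor spectrum, and I would cite this from the reference \cite{Mue} rather than reprove it. Once that inclusion is granted, the argument is immediate; the whole proof is then just the composition of the model-operator bound with the compression inclusion.
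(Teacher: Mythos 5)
Your proposal has two genuine gaps; the second is fatal as written, and repairing the first essentially forces you back onto the paper's own argument. The first gap is in your bound for the model tuple. You claim that boundedness of the $M_{z_i}^{(k)}$ (i.e.\ $\sup_n a_n^{(k)}/a_{n+1}^{(k)}<\infty$) together with the radius of convergence of $\sum_n a_n^{(k)}t^n$ being $1$ pins the joint spectral radius of $\bfM_{\bm z}^{(k)}$ at $1$. That implication is false. Already for $d=1$ one has $r(M_z^{(k)})=\lim_n\bigl(\sup_m a_m^{(k)}/a_{m+n}^{(k)}\bigr)^{1/2n}$, and coefficient sequences that decay geometrically along long blocks and then recover to $1$ satisfy both of your hypotheses (bounded consecutive ratios, radius of convergence $1$) while the spectral radius stays strictly above $1$. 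What actually controls the joint spectral radius in this paper's setting is the CNP factor $s$, which your argument never uses: for a $1/s$-contraction $\bfS$ the nonnegativity of the $b_n^{(s)}$ gives $b_n^{(s)}\sum_{|\alpha|=n}\binom{n}{\alpha}\bfS^{\alpha}(\bfS^{\alpha})^{*}\le I$, and the radius-of-convergence hypothesis on the $s$-series transfers to the sequence $(b_n^{(s)})$, which bounds the joint spectral radius by $1$; this is precisely Lemma 5.3 of \cite{CH}. That is the paper's proof: by Lemma \ref{k implies s} the pure $1/k$-contraction $\bfT$ is itself a pure $1/s$-contraction, and \cite{CH} then applies \emph{directly to} $\bfT$, with no model operator, tensoring, or compression involved. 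Once your first step is repaired this way, your remaining steps are superfluous.

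The second gap is the transfer step. The ``standard fact'' you invoke --- that restriction to an invariant subspace, equivalently compression to a co-invariant subspace, can only shrink the Taylor spectrum --- is false, already for a single operator: the bilateral shift on $\ell^{2}(\mathbb{Z})$ has spectrum the unit circle, while its restriction to the invariant subspace $\ell^{2}(\mathbb{Z}_{+})$ is the unilateral shift, whose spectrum is the whole closed disc; taking adjoints gives the same failure for compressions. No citation (in particular not \cite{Mue}) can supply the statement in the form you use it. What is true is the hull inclusion $\sigma(N|_{\cM})\subseteq\widehat{\sigma(N)}$, where $\widehat{\phantom{x}}$ denotes the polynomially convex hull; this follows from the spectral mapping theorem for the Taylor spectrum combined with $\|(p(N)|_{\cM})^{n}\|\le\|p(N)^{n}\|$ for polynomials $p$. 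Your conclusion could then be salvaged because $\overline{\mathbb{B}_{d}}$ is polynomially convex, so $\sigma(N)\subseteq\overline{\mathbb{B}_{d}}$ does imply $\widehat{\sigma(N)}\subseteq\overline{\mathbb{B}_{d}}$. But that repair has to be stated and justified; as written, the step you flag as the ``main obstacle'' is not merely delicate, it asserts a false principle.
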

	\begin{proof}
		By assumption, the power series $\sum_{n=0}^{\infty} a_{n}^{(s)} t^{n}$ has a radius of convergence of $1$. According to Lemma \ref{k implies s}, the $d$-tuple $\bfT$ is a $1/s$-contraction. The remainder of the proof follows from Lemma 5.3 in \cite{CH}.
	\end{proof}
	For a pure $1/k$-contraction $\bfT = (T_{1}, \dots, T_{d})$ acting on a Hilbert space $\cH$, we define the following notations for $\bm{w} \in \mathbb{B}_{d}$:
	$$k_{\bm{w}}(\bfT) = \sum\limits_{\alpha\in\mathbb{Z}^{d}_{+}} a_{\alpha}^{(k)} \overline{\bm{w}^{\alpha}} \bfT^{\alpha}, \hspace{3mm} s_{\bm{w}}(\bfT) = \sum\limits_{\alpha\in\mathbb{Z}^{d}_{+}} a_{\alpha}^{(s)} \overline{\bm{w}^{\alpha}} \bfT^{\alpha}, \hspace{3mm} g_{\bm{w}}(\bfT) = \sum\limits_{\alpha\in\mathbb{Z}^{d}_{+}} a_{\alpha}^{(g)} \overline{\bm{w}^{\alpha}} \bfT^{\alpha}.$$
	All three series $k_{\bm{w}}(\bfT)$, $s_{\bm{w}}(\bfT)$, and $g_{\bm{w}}(\bfT)$ converge in the norm operator topology. This can be established using the multi-variable functional calculus, as stated in Theorem III.9.9 in \cite{Vas}. By applying Theorem III.9.9 in \cite{Vas} once again, we obtain the equality
	\begin{equation*}
		k_{\bm{w}}(\bfT) = s_{\bm{w}}(\bfT) g_{\bm{w}}(\bfT)
	\end{equation*}
	and we also conclude that the series $\sum_{\alpha\in\mathbb{Z}^{d}_{+} \backslash \{0\}} b_{\alpha}^{(s)} \overline{\bm{w}^{\alpha}} \bfT^{\alpha}$ converges in the norm operator topology. Furthermore, we have the identity
	\begin{equation}\label{id3}
		\Big(  I - \sum\limits_{\alpha\in\mathbb{Z}^{d}_{+} \backslash \{0\}} b_{\alpha}^{(s)} \overline{\bm{w}^{\alpha}} \bfT^{\alpha} \Big)^{-1} = \sum\limits_{\alpha\in\mathbb{Z}^{d}_{+}} a_{\alpha}^{(s)} \overline{\bm{w}^{\alpha}} \bfT^{\alpha} = s_{\bm{w}}(\bfT).
	\end{equation}
	
	\subsection{Construction of the characteristic function}
	Let $\bfT = (T_{1} , \dots, T_{d})$ be a pure $1/k-$contraction acting on a Hilbert space $\cH.$ For each $\alpha\in\mathbb{Z}^{d}_{+},$ we define a subspace $\cE_{\alpha}$ of Hilbert space $\cH$ as follows:
	$$\cE_{\alpha} = \begin{cases} \overline{\text{Ran}} \Delta_{\bfT}, & a_{\alpha}^{(g)} \neq 0 \\ 0, & a_{\alpha}^{(g)} = 0\end{cases} .$$
	We define a new Hilbert space $\cE$ as the direct sum of the subspaces $\cE_{\alpha}$:
	$$\cE = \oplus_{\alpha\in\mathbb{Z}^{d}_{+}} \cE_{\alpha}.$$
	
	Since $a_{\alpha}^{(g)} \leq a_{\alpha}^{(k)}$ for all $\alpha \in \mathbb{Z}^{d}_{+},$ we have the following operator relation:
	\begin{equation}\label{id4}
		\sum\limits_{\alpha\in\mathbb{Z}^{d}_{+}} a_{\alpha}^{(g)} \bfT^{\alpha} \Delta_{\bfT}^{2} (\bfT^{\alpha})^{*} \leq \sum\limits_{\alpha\in\mathbb{Z}^{d}_{+}} a_{\alpha}^{(k)} \bfT^{\alpha} \Delta_{\bfT}^{2} (\bfT^{\alpha})^{*} = I_{\cH}
	\end{equation}
	where $I_{\cH}$ denotes the identity operator on the Hilbert space $\cH$. We define the linear map $\Pi_{\bfT} : \cH \to \cE  $  as follows:
	$$ \Pi_{\bfT} : h \mapsto  \begin{bmatrix}
		\vdots \\
		(a_{\alpha}^{(g)})^{1/2} \Delta_{\bfT} (\bfT^{\alpha})^{*} h\\
		\vdots
	\end{bmatrix}_{\alpha\in\mathbb{Z}^{d}_{+}}.$$
	By using \eqref{id4}, we can see that $\Pi_{\bfT}$ is a contraction.
	\begin{lemma} \label{Pi}
		The operator $\Pi_{\bfT} : \cH \to \cE$ defined above and the operator $\Gamma_{\bfT}: \cH \to \cH$ defined in \eqref{Gamma_T} satisfy the following:
		$$ \Pi_{\bfT}^{*} \Pi_{\bfT} = \Gamma_{\bfT}^{2}.$$
	\end{lemma}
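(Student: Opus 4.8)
The plan is to first turn the claimed identity into a purely algebraic operator identity, and then to prove that identity by recognizing the various sums as completely positive conjugation maps whose composition law exactly mirrors the factorization $k=sg$.

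First I would compute $\Pi_{\bfT}^{*}\Pi_{\bfT}$ directly from the definition of $\Pi_{\bfT}$. For $h,h'\in\cH$,
\[
\la \Pi_{\bfT}h,\Pi_{\bfT}h'\ra_{\cE}=\sum\limits_{\alpha\in\mathbb{Z}^{d}_{+}} a_{\alpha}^{(g)}\la \Delta_{\bfT}(\bfT^{\alpha})^{*}h,\Delta_{\bfT}(\bfT^{\alpha})^{*}h'\ra=\Big\la \Big(\sum\limits_{\alpha\in\mathbb{Z}^{d}_{+}} a_{\alpha}^{(g)}\bfT^{\alpha}\Delta_{\bfT}^{2}(\bfT^{\alpha})^{*}\Big)h,h'\Big\ra,
\]
using that $\Delta_{\bfT}$ is self-adjoint. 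Hence $\Pi_{\bfT}^{*}\Pi_{\bfT}=\sum_{\alpha}a_{\alpha}^{(g)}\bfT^{\alpha}\Delta_{\bfT}^{2}(\bfT^{\alpha})^{*}$, and the lemma becomes equivalent to the operator identity
\[
\sum\limits_{\alpha\in\mathbb{Z}^{d}_{+}} a_{\alpha}^{(g)}\bfT^{\alpha}\Delta_{\bfT}^{2}(\bfT^{\alpha})^{*}=\Gamma_{\bfT}^{2}.
\]

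Next I would introduce, for a coefficient sequence, the associated conjugation-sum maps $\Phi_{s}(X)=\sum_{\beta}a_{\beta}^{(s)}\bfT^{\beta}X(\bfT^{\beta})^{*}$, and analogously $\Phi_{g},\Phi_{k}$, together with the map $\Psi_{s}(X)=X-\sum_{\beta\neq 0}b_{\beta}^{(s)}\bfT^{\beta}X(\bfT^{\beta})^{*}$. Since the $T_{i}$ commute, $\bfT^{\alpha}\big(\bfT^{\beta}X(\bfT^{\beta})^{*}\big)(\bfT^{\alpha})^{*}=\bfT^{\alpha+\beta}X(\bfT^{\alpha+\beta})^{*}$, so composing two such maps convolves their coefficient ``symbols''. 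The factorization $k=sg$ at the level of coefficients reads $a_{\delta}^{(k)}=\sum_{\alpha+\beta=\delta}a_{\alpha}^{(s)}a_{\beta}^{(g)}$, which gives the composition law $\Phi_{s}\circ\Phi_{g}=\Phi_{k}$. Likewise, relation \eqref{bn} applied to $s$ says precisely that the coefficient families of $1-\sum_{\beta\neq 0}b_{\beta}^{(s)}t^{\beta}$ (the symbol of $\Psi_{s}$) and of $s$ (the symbol of $\Phi_{s}$) are mutually inverse under convolution, giving $\Psi_{s}\circ\Phi_{s}=\mathrm{id}$. I would note also that $\Psi_{s}(I)=I-\sum_{\beta\neq0}b_{\beta}^{(s)}\bfT^{\beta}(\bfT^{\beta})^{*}=\Gamma_{\bfT}^{2}$ by \eqref{Gamma_T}.

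The identity then falls out: purity of $\bfT$ as a $1/k$-contraction is exactly $\Phi_{k}(\Delta_{\bfT}^{2})=I$, so $\Phi_{s}\big(\Phi_{g}(\Delta_{\bfT}^{2})\big)=\Phi_{k}(\Delta_{\bfT}^{2})=I$; applying $\Psi_{s}$ and using $\Psi_{s}\circ\Phi_{s}=\mathrm{id}$ yields $\Phi_{g}(\Delta_{\bfT}^{2})=\Psi_{s}(I)=\Gamma_{\bfT}^{2}$, as required. (Alternatively, since $\bfT$ is a pure $1/s$-contraction by Lemma \ref{k implies s}, one has $\Phi_{s}(\Gamma_{\bfT}^{2})=I$ as well, and injectivity of $\Phi_{s}$—itself a consequence of $\Psi_{s}\circ\Phi_{s}=\mathrm{id}$—concludes.) The main obstacle I anticipate is not the algebra but the analysis: justifying strong-operator convergence and the rearrangement of these infinite sums when composing the maps. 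I would handle this by observing that for positive $X$ the partial sums of $\Phi_{s},\Phi_{g},\Phi_{k}$ are increasing and, by purity, bounded above by $I$, so that monotone convergence legitimizes the interchange; the norm-convergent functional calculus behind \eqref{id3} (Theorem III.9.9 in \cite{Vas}) further legitimizes treating $\Psi_{s}$ as the genuine inverse of $\Phi_{s}$ on the operators at hand.
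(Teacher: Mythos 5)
Your proposal is correct, and it runs on the same fuel as the paper's proof: the reduction $\Pi_{\bfT}^{*}\Pi_{\bfT}=\sum_{\alpha\in\mathbb{Z}^{d}_{+}}a_{\alpha}^{(g)}\bfT^{\alpha}\Delta_{\bfT}^{2}(\bfT^{\alpha})^{*}$, the power-series relations coming from $k=sg$ and \eqref{bn}, and purity of $\bfT$. The organization, however, is genuinely different. The paper substitutes the single ``division'' identity $a_{\alpha}^{(g)}=a_{\alpha}^{(k)}-\sum_{\gamma\neq 0}b_{\gamma}^{(s)}a_{\alpha-\gamma}^{(k)}$ (in your language, the coefficient form of $\Phi_{g}=\Psi_{s}\circ\Phi_{k}$), regroups the resulting double sum, and invokes purity \emph{twice} --- once to create the leading $I$ and once inside the $\gamma$-conjugated sum; it never needs $\Psi_{s}\circ\Phi_{s}=\mathrm{id}$ as an operator identity. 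You instead go ``forward'' with $\Phi_{k}=\Phi_{s}\circ\Phi_{g}$, invoke purity once to get $\Phi_{s}(\Phi_{g}(\Delta_{\bfT}^{2}))=I$, and then cancel $\Phi_{s}$ by its left inverse $\Psi_{s}$; this costs one extra operator-level rearrangement (justifying $\Psi_{s}(\Phi_{s}(Y))=Y$ for $Y=\Pi_{\bfT}^{*}\Pi_{\bfT}$) but buys a cleaner conceptual picture of why the factorization $k=sg$ is the operative hypothesis, and, unlike the paper, an explicit treatment of convergence. One point you should make explicit: your monotone-convergence argument covers $\Phi_{s},\Phi_{g},\Phi_{k}$, whose terms are positive because the $a$-coefficients are nonnegative, but the rearrangement hidden in $\Psi_{s}\circ\Phi_{s}=\mathrm{id}$ (and equally in the paper's own regrouping) needs the subtracted terms $b_{\gamma}^{(s)}\bfT^{\gamma}Y(\bfT^{\gamma})^{*}$ to be positive as well, i.e., $b_{\gamma}^{(s)}\geq 0$; this is exactly where the hypothesis that $s$ is an irreducible complete Nevanlinna-Pick kernel enters, via the lemma of Hartz \cite{H} quoted in Section 2. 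With that noted, both your main argument and your parenthetical alternative (purity of $\bfT$ as a $1/s$-contraction from Lemma \ref{k implies s} plus injectivity of $\Phi_{s}$) are sound.
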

	\begin{proof}
		For $\alpha\in\mathbb{Z}^{d}_{+} \backslash \{0\}$ we have the following relation: $$a_{\alpha}^{(g)} = a_{\alpha}^{(k)} - \sum\limits_{\gamma \in \mathbb{Z}^{d}_{+} \backslash \{0\}} b_{\gamma}^{(s)} a_{\alpha - \gamma}^{(k)}.$$
		We can compute $\Pi_{\bfT}^* \Pi_{\bfT}$ as follows:
		\begin{align*}
			\Pi_{\bfT}^{*} \Pi_{\bfT}
			& = \sum\limits_{\alpha \in\mathbb{Z}^{d}_{+}}  a_{\alpha}^{(g)} \bfT^{\alpha} \Delta_{\bfT}^{2} (\bfT^{\alpha})^{*} \\
			& = \Delta_{\bfT}^{2} +  \sum\limits_{\alpha \in\mathbb{Z}^{d}_{+} \backslash \{0\}} \Big(a_{\alpha}^{(k)} -  \sum\limits_{\gamma \in\mathbb{Z}^{d}_{+} \backslash \{0\}} b_{\gamma}^{(s)} a_{\alpha - \gamma}^{(k)} \Big) \bfT^{\alpha} \Delta_{\bfT}^{2} (\bfT^{\alpha})^{*} \\
			& = I - \sum\limits_{\gamma \in\mathbb{Z}^{d}_{+} \backslash \{0\}} b_{\gamma}^{(s)} \sum\limits_{\alpha \in\mathbb{Z}^{d}_{+} } a_{\alpha - \gamma }^{(k)} \bfT^{\alpha} \Delta_{\bfT}^{2} (\bfT^{\alpha})^{*} \\
			& = I - \sum\limits_{\gamma \in\mathbb{Z}^{d}_{+} \backslash \{0\}} b_{\gamma}^{(s)} \bfT^{\gamma} \Big( \sum\limits_{\alpha \in\mathbb{Z}^{d}_{+} } a_{\alpha }^{(k)} \bfT^{\alpha} \Delta_{\bfT}^{2} (\bfT^{\alpha})^{*} \Big)  (\bfT^{\gamma})^{*} \\
			& = I - \sum\limits_{\gamma \in\mathbb{Z}^{d}_{+} \backslash \{0\}} b_{\gamma}^{(s)} \bfT^{\gamma} (\bfT^{\gamma})^{*} .
		\end{align*}
		The last quantity is $\Gamma_{\bfT}^{2}$.
	\end{proof}

	By Lemma \ref{Pi}, there exists a unique unitary operator $u: \overline{\rm Ran} \Gamma_{\bfT} \to \overline{\rm Ran} \Pi_{\bfT}$ with $ u \Gamma_{\bfT} h = \Pi_{\bfT} h $ for $h\in\cH $. We define two Hilbert spaces: $\hat{\cH} = \cE \ominus \overline{\rm Ran} \Pi_{\bfT}$ and $\tilde{\cH} = \oplus_{\alpha\in\mathbb{Z}^{d}_{+} \backslash \{0\}} \cH.$ For each multi-index $\alpha\in\mathbb{Z}^{d}_{+} \backslash \{0\}$, let $\psi_{\alpha} : \mathbb{B}_{d} \to \mathbb{C}$ be the polynomial defined by $\psi_{\alpha}(\bm{z}) = (b_{\alpha}^{(s)})^{1/2} \bm{z}^{\alpha}.$ We define the infinite operator tuple $\bfZ : \tilde{\cH} \to \cH$ by
	$$\bfZ = (\psi_{\alpha}(\bm z) I_{\cH})_{\alpha\in\mathbb{Z}^{d}_{+} \backslash \{0\}}.$$
	The operator $\bfZ$ is a strict contraction because
	$$\|\bfZ\|^{2} = \sum\limits_{\alpha\in\mathbb{Z}^{d}_{+} \backslash \{0\}} b_{\alpha}^{(s)} | \bm{z}^{\alpha}|^{2} = 1 - \frac{1}{s(\bm{z}, \bm{z})} < 1.$$
	Let $\tilde{\bfT}: \tilde{\cH} \to \cH$ be the infinite operator tuple
	\begin{equation*}\label{def Ttilde}
		\tilde{\bfT}  =  (\psi_{\alpha}(\bfT))_{\alpha\in\mathbb{Z}^{d}_{+} \backslash \{0\}}.
	\end{equation*}
	It can be verified that $\tilde{\bfT}$ is a contraction if and only if $\bfT$ is a $1/s$-contraction.  Furthermore, we have
	$$\tilde{\bfT} \tilde{\bfT}^{*} = \sum\limits_{\alpha\in\mathbb{Z}^{d}_{+} \backslash \{0\}} b_{\alpha}^{(s)} \bfT^{\alpha} (\bfT^{\alpha})^{*} =  I_{\cH} - \Gamma_{\bfT}^{2},$$
	which implies $\Gamma_{\bfT}^{2} = I_{\cH} - \tilde{\bfT} \tilde{\bfT}^{*}.$ Let $D_{\tilde{\bfT}}$ be the unique positive square root of the positive operator $I_{\tilde{H}} - \tilde{\bfT}^{*} \tilde{\bfT},$ and let $\cD_{\tilde{\bfT}} = \overline{\rm Ran} D_{\tilde{\bfT}}.$ Using equation (I.3.4) from \cite{NF}, we have the identity
	\begin{equation}\label{defect}
		\tilde{\bfT}D_{\tilde{\bfT}}= \Gamma_{\bfT} \tilde{\bfT}.
	\end{equation}
	Since $\bfZ \tilde{\bfT}^{*}$ is a strict contraction, the operator $I_{\cH}-\bfZ \tilde{\bfT}^{*}$ is invertible. Moreover, using the definition of $\bfZ$ and $\tilde{\bfT}$, we have
	$$ I_{\cH} - \bfZ \tilde{\bfT}^{*} =  I_{\cH} - \sum\limits_{\alpha\in\mathbb{Z}^{d}_{+} \backslash \{0\}} b_{\alpha}^{(s)} \bm{z}^{\alpha}(\bfT^{\alpha})^{*}.$$
	From equation \eqref{id3}, we know that
	$$(I_{\cH} - \bfZ \tilde{\bfT}^{*})^{-1} = s_{\bm z}(\bfT)^{*}.$$
	This implies that for any $\bm z\in\mathbb{B}_{d},$ we have
	\begin{equation}  \label{I4}
		(g_{\bm{z}}(\bfT))^{*} = (k_{\bm{z}}(\bfT))^{*} (I_{\cH} - \bfZ \tilde{\bfT}^{*}).
	\end{equation}
	Let us define two linear maps:
	\begin{enumerate}
		\item $B : \cD_{\tilde{\bfT}} \oplus \hat{\cH} \to \tilde{\cH}$ given by the row matrix representation
		$ B \bydef \begin{bmatrix}
			D_{\tilde{\bfT}} & 0
		\end{bmatrix}.$
		\item $ D: \cD_{\tilde{\bfT}} \oplus \hat{\cH} \to \cE $ by the row matrix representation
		$ D \bydef \begin{bmatrix}
			-u \tilde{\bfT} & -i_{\hat{\cH}}
		\end{bmatrix}$
		where $i_{\hat{\cH}} : \hat{\cH} \to \cE $ is the inclusion mapping.
	\end{enumerate}
	We can define the map $D$ because of the identity in \eqref{defect}, which guarantees that $\tilde{\bfT}$ maps $\cD_{\tilde{\bfT}}$ into $\overline{\rm Ran} \Gamma_{\bfT}$, which is the domain of $u$. It may be convenient to represent $D$ as the column matrix
	$$D = \begin{bmatrix}
		\vdots \\
		D_{\alpha}\\
		\vdots
	\end{bmatrix}_{\alpha\in\mathbb{Z}^{d}_{+}}$$ with respect to the decomposition $\cE = \oplus_{\alpha \in\mathbb{Z}^{d}_{+}} \cE_{\alpha}.$

	\begin{definition}
		The characteristic function of a pure $1/k-$contraction $\bfT = (T_{1}, \dots, T_{d})$ is the analytic operator-valued function $\theta_{\bfT}: \mathbb{B}_{d} \to \cB(\cD_{\tilde{\bfT}} \oplus \hat{\cH}, \overline{\rm Ran} \Delta_{\bfT})$ defined by
		\begin{equation} \label{chfn2}
			\theta_{\bfT}(\bm{z}) = \Bigl( \sum\limits_{\alpha\in\mathbb{Z}^{d}_{+}} (a_{\alpha}^{(g)})^{1/2} D_{\alpha} \bm{z}^{\alpha} \Bigr) + \Delta_{\bfT} (k_{\bm{z}}(\bfT))^{*} \bfZ B .
		\end{equation}
	\end{definition}
	
	It is easy to see that the series $ \sum\limits_{\alpha\in\mathbb{Z}^{d}_{+}} (a_{\alpha}^{(g)})^{1/2} D_{\alpha} \bm{z}^{\alpha} $ converges strongly for all $\bm z\in\mathbb{B}_{d}.$ 

\begin{remark}
The choice of $\cE$ is not unique. The choice of $\cE_\alpha$ corresponding to those indices $\alpha$ in $\mathbb{Z}^{d}_{+}$ for which $a_{\alpha}^{(g)} = 0$ do not affect subsequent computations. 
Since we obviously want the characteristic function $\theta_{\bfT}$ to boil down to what it was in Definition 4.7 of \cite{BJ} in the special case $k=s$,  we need to ensure that $\hat{\cH}$ is the zero space in that case. This is ensured by the particular choice of $\cE$ which we made.
\end{remark}

We claim that $\theta_{\bfT}$ is a partially isometric multiplier from $H_{s}(\cD_{\tilde{\bfT}} \oplus \hat{\cH})$ to $H_{k}(\cE).$ To show that, we first proceed to compute $\theta_{\bfT}(\bm{z}) \theta_{\bfT}(\bm{w})^{*}$ for $\bm{z}, \bm{w} \in \mathbb{B}_{d}.$ The proof below highlights the separate roles of $s$ and $g$ following the main ideas from \cite{BDS}.
	
	\begin{lemma}\label{lemma1}
		For $\bm{z}, \bm{w} \in \mathbb{B}_{d},$ the identity
		\begin{equation*}
			s(\bm{z} , \bm{w}) \theta_{\bfT}(\bm{z}) \theta_{\bfT}(\bm{w})^{*} = k(\bm{z},\bm{w}) I_{\overline{\rm Ran} \Delta_{\bfT}}  - \Delta_{\bfT} k_{\bm{z}}(\bfT)^{*} k_{\bm{w}}(\bfT) \Delta_{\bfT}
		\end{equation*}
		holds.
	\end{lemma}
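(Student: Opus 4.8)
The plan is to factor the characteristic function as a point-dependent row operator times a fixed column, expand $\theta_{\bfT}(\bm z)\theta_{\bfT}(\bm w)^{*}$ into four blocks, and collapse each block with the identities already assembled. First I would introduce the row operator $R_{\bm z}:\cE\to\overline{\rm Ran}\Delta_{\bfT}$ given on $\cE=\oplus_{\alpha}\cE_{\alpha}$ by $(\xi_{\alpha})_{\alpha}\mapsto\sum_{\alpha}(a_{\alpha}^{(g)})^{1/2}\bm z^{\alpha}\xi_{\alpha}$. Since $D=(D_{\alpha})_{\alpha}$ is the column of the $D_{\alpha}$, the first summand of \eqref{chfn2} is exactly $R_{\bm z}D$, so that $\theta_{\bfT}(\bm z)=R_{\bm z}D+\Delta_{\bfT}k_{\bm z}(\bfT)^{*}\bfZ B$ (with $\bfZ$ understood at $\bm z$).

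Before expanding I would record a short list of identities. Directly from $g(\bm z,\bm w)=\sum_{\alpha}a_{\alpha}^{(g)}\bm z^{\alpha}\overline{\bm w}^{\alpha}$ one gets $R_{\bm z}R_{\bm w}^{*}=g(\bm z,\bm w)I$ and $R_{\bm z}\Pi_{\bfT}=\Delta_{\bfT}g_{\bm z}(\bfT)^{*}$, whose adjoint, taken at $\bm w$, reads $\Gamma_{\bfT}u^{*}R_{\bm w}^{*}=g_{\bm w}(\bfT)\Delta_{\bfT}$ since $u\Gamma_{\bfT}=\Pi_{\bfT}$. From $u\Gamma_{\bfT}=\Pi_{\bfT}$ and the defect relation \eqref{defect} I obtain the colligation products $DB^{*}=-\Pi_{\bfT}\tilde{\bfT}$, $BD^{*}=-\tilde{\bfT}^{*}\Pi_{\bfT}^{*}$ and $BB^{*}=I-\tilde{\bfT}^{*}\tilde{\bfT}$. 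Writing $S_{\bm w}(\bfT)\bydef\sum_{\alpha\neq 0}b_{\alpha}^{(s)}\overline{\bm w}^{\alpha}\bfT^{\alpha}$, the definitions of $\bfZ$ and $\tilde{\bfT}$ together with \eqref{bn} give $\bfZ\bfZ^{*}=(1-1/s(\bm z,\bm w))I$, $\tilde{\bfT}\bfZ^{*}=S_{\bm w}(\bfT)$ and $\bfZ\tilde{\bfT}^{*}=S_{\bm z}(\bfT)^{*}$; finally \eqref{I4} is precisely $g_{\bm z}(\bfT)^{*}=k_{\bm z}(\bfT)^{*}(I-S_{\bm z}(\bfT)^{*})$ once one notes $\bfZ\tilde{\bfT}^{*}=S_{\bm z}(\bfT)^{*}$, with adjoint $g_{\bm w}(\bfT)=(I-S_{\bm w}(\bfT))k_{\bm w}(\bfT)$.

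Expanding $\theta_{\bfT}(\bm z)\theta_{\bfT}(\bm w)^{*}$ produces four blocks, three of which are immediate from the list above: the two cross blocks collapse to $-\Delta_{\bfT}g_{\bm z}(\bfT)^{*}S_{\bm w}(\bfT)k_{\bm w}(\bfT)\Delta_{\bfT}$ and $-\Delta_{\bfT}k_{\bm z}(\bfT)^{*}S_{\bm z}(\bfT)^{*}g_{\bm w}(\bfT)\Delta_{\bfT}$, while the $\bfZ BB^{*}\bfZ^{*}$ block becomes $\Delta_{\bfT}k_{\bm z}(\bfT)^{*}\bigl[(1-1/s(\bm z,\bm w))I-S_{\bm z}(\bfT)^{*}S_{\bm w}(\bfT)\bigr]k_{\bm w}(\bfT)\Delta_{\bfT}$. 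The remaining block $R_{\bm z}DD^{*}R_{\bm w}^{*}$ is where I expect the real work, because $DD^{*}=u\tilde{\bfT}\tilde{\bfT}^{*}u^{*}+i_{\hat{\cH}}i_{\hat{\cH}}^{*}$ involves the partial isometry $u$, defined only on $\overline{\rm Ran}\Gamma_{\bfT}$, and the difficulty is to see that it simplifies cleanly. The key is to substitute $\tilde{\bfT}\tilde{\bfT}^{*}=I-\Gamma_{\bfT}^{2}$ and $i_{\hat{\cH}}i_{\hat{\cH}}^{*}=I-uu^{*}$ (as $uu^{*}$ is the projection onto $\overline{\rm Ran}\Pi_{\bfT}$); the two $uu^{*}$ contributions then cancel and one is left with the clean $DD^{*}=I_{\cE}-u\Gamma_{\bfT}^{2}u^{*}$. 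Since $\Gamma_{\bfT}^{2}$ preserves $\overline{\rm Ran}\Gamma_{\bfT}$, the expression $u\Gamma_{\bfT}^{2}u^{*}$ is legitimate, and factoring $\Gamma_{\bfT}^{2}=\Gamma_{\bfT}\cdot\Gamma_{\bfT}$ and applying $R_{\bm z}u\Gamma_{\bfT}=\Delta_{\bfT}g_{\bm z}(\bfT)^{*}$ on the left and $\Gamma_{\bfT}u^{*}R_{\bm w}^{*}=g_{\bm w}(\bfT)\Delta_{\bfT}$ on the right reduces this block to $g(\bm z,\bm w)I-\Delta_{\bfT}g_{\bm z}(\bfT)^{*}g_{\bm w}(\bfT)\Delta_{\bfT}$.

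Finally I would substitute $g_{\bm z}(\bfT)^{*}=k_{\bm z}(\bfT)^{*}(I-S_{\bm z}(\bfT)^{*})$ and $g_{\bm w}(\bfT)=(I-S_{\bm w}(\bfT))k_{\bm w}(\bfT)$ into the four simplified blocks. Every term except the scalar $g(\bm z,\bm w)I$ then carries $\Delta_{\bfT}k_{\bm z}(\bfT)^{*}$ on the left and $k_{\bm w}(\bfT)\Delta_{\bfT}$ on the right, and the middle operator is $-(I-S_{\bm z}(\bfT)^{*})(I-S_{\bm w}(\bfT))-(I-S_{\bm z}(\bfT)^{*})S_{\bm w}(\bfT)-S_{\bm z}(\bfT)^{*}(I-S_{\bm w}(\bfT))+(1-1/s(\bm z,\bm w))I-S_{\bm z}(\bfT)^{*}S_{\bm w}(\bfT)$. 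All the $S_{\bm z}(\bfT)^{*}$, $S_{\bm w}(\bfT)$ and $S_{\bm z}(\bfT)^{*}S_{\bm w}(\bfT)$ terms cancel, leaving exactly $-\tfrac{1}{s(\bm z,\bm w)}I$. Hence $\theta_{\bfT}(\bm z)\theta_{\bfT}(\bm w)^{*}=g(\bm z,\bm w)I-\tfrac{1}{s(\bm z,\bm w)}\Delta_{\bfT}k_{\bm z}(\bfT)^{*}k_{\bm w}(\bfT)\Delta_{\bfT}$, and multiplying through by $s(\bm z,\bm w)$ and using $s(\bm z,\bm w)g(\bm z,\bm w)=k(\bm z,\bm w)$ gives the stated identity. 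The one genuinely delicate point, as flagged, is the $DD^{*}$ block: keeping track of the domain of $u$ and recognizing the cancellation that produces $I_{\cE}-u\Gamma_{\bfT}^{2}u^{*}$.
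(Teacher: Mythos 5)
Your proposal is correct and takes essentially the same route as the paper: the paper also expands $\theta_{\bfT}(\bm{z})\theta_{\bfT}(\bm{w})^{*}$ into the same four blocks and collapses them using exactly your three colligation identities (packaged there as unitarity of the block operator $U = \sbm{\tilde{\bfT}^{*} & B \\ \Pi_{\bfT} & D }$) together with \eqref{I4}. The only differences are cosmetic: you derive $DD^{*}=I_{\cE}-u\Gamma_{\bfT}^{2}u^{*}$, $DB^{*}=-\Pi_{\bfT}\tilde{\bfT}$ and $BB^{*}=I-\tilde{\bfT}^{*}\tilde{\bfT}$ by hand (the domain issue you flag is settled by the adjoint of \eqref{defect}, which shows $\tilde{\bfT}^{*}$ maps $\overline{\rm Ran}\,\Gamma_{\bfT}$ into $\cD_{\tilde{\bfT}}$) and you postpone the substitution of \eqref{I4} to the final cancellation, whereas the paper cites the unitarity of $U$ and substitutes \eqref{I4} block by block.
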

	\begin{proof}
		Fix $\bm z, \bm w \in \mathbb{B}_{d}.$ For simplicity, let us set the following notations:
		$$ x_{\alpha} = (a_{\alpha}^{(g)})^{1/2}, \quad \text{and}$$
		$$X(\bm{z}) = \sum\limits_{\alpha\in\mathbb{Z}^{d}_{+}} x_{\alpha} D_{\alpha} \bm{z}^{\alpha}, \quad  Y(\bm{z}) =  \Delta_{\bfT} (k_{\bm{z}}(\bfT))^{*} \bfZ B.$$
		With these notations we can write the following:
		$$ \theta_{\bfT}(\bm{z}) \theta_{\bfT}(\bm{w})^{*} = X(\bm{z}) X(\bm{w})^{*} + X(\bm{z}) Y(\bm{w})^{*} + Y(\bm{z}) X(\bm{w})^{*} + Y(\bm{z}) Y(\bm{w})^{*}.$$
		A direct computation shows that the operator
		$$ U \bydef \begin{bmatrix}
			\tilde{\bfT}^{*} & B \\
			\Pi_{\bfT} & D
		\end{bmatrix} : \cH \oplus (\cD_{\tilde{\bfT}} \oplus \hat{\cH}) \to \tilde{\cH} \oplus \cE $$
		defines a block unitary matrix. Using the fact that $\Pi_{\bfT} \Pi_{\bfT}^{*} + D D^{*} = I_{\cE }$ and identity \eqref{I4}, we have
		$$	X(\bm{z})	X(\bm{w})^{*} = g(\bm{z}, \bm{w})I_{\overline{\text{Ran}} \Delta_{\bfT}}- \Delta_{\bfT} (k_{\bm{z}}(\bfT))^{*} (I - \bfZ \tilde{\bfT}^{*}) (I - \tilde{\bfT} \bfW^{*}) k_{\bm{w}}(\bfT) \Delta_{\bfT} .$$
		We use the equation $\Pi_{\bfT} \tilde{\bfT} + D B^{*} = 0 $ and identity \eqref{I4} to obtain
		$$X(\bm{z}) Y(\bm{w})^{*} = - \Delta_{\bfT} (k_{\bm{z}}(\bfT))^{*} (I - \bfZ \tilde{\bfT}^{*}) \tilde{\bfT} \bfW^{*} k_{\bm{w}}(\bfT) \Delta_{\bfT} .$$
		Similarly, we get that
		$$ Y(\bm{z}) X(\bm{w})^{*} = - \Delta_{\bfT} (k_{\bm{z}}(\bfT))^{*} \bfZ \tilde{\bfT}^{*} (I - \tilde{\bfT} \bfW^{*}) k_{\bm{w}}(\bfT) \Delta_{\bfT} .$$
		Finally, we use $\tilde{\bfT}^{*} \tilde{\bfT} + B B^{*} = I_{\tilde{\cH}}$ to obtain
		$$ Y(\bm{z}) Y(\bm{w})^{*}  =  \Delta_{\bfT} (k_{\bm{z}}(\bfT))^{*} \bfZ (I - \tilde{\bfT}^{*} \tilde{\bfT}) \bfW^{*} k_{\bm{w}}(\bfT) \Delta_{\bfT}.$$
		Combining these expressions, we obtain the desired identity.
	\end{proof}

	The proof of the following lemma follows the same approach as the proof of Lemma 4.9 in \cite{BJ}.
	
	\begin{lemma}\label{V_T*2}
		For any $\bm{w}\in\mathbb{B}_{d}$ and $\xi \in \overline{{\rm Ran}} \Delta_{\bfT}$ we have the identity
		$$V_{\bfT}^{*}(k_{\bm{w}}\otimes\xi)=k_{\bm{w}}(\bfT) \Delta_{\bfT}\xi.$$
	\end{lemma}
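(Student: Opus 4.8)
The plan is to verify the identity weakly, i.e.\ by pairing both sides against an arbitrary vector $h\in\cH$ and using the adjoint relation together with the explicit formula for $V_{\bfT}$ from Theorem \ref{V_T}. Concretely, for fixed $\bm w\in\mathbb{B}_{d}$, $\xi\in\overline{\rm Ran}\,\Delta_{\bfT}$ and arbitrary $h\in\cH$, I would start from
$$
\la V_{\bfT}^{*}(k_{\bm w}\otimes\xi),h\ra_{\cH}=\la k_{\bm w}\otimes\xi,\,V_{\bfT}h\ra
=\Big\la k_{\bm w}\otimes\xi,\ \sum\limits_{\alpha\in\mathbb{Z}^{d}_{+}} a_{\alpha}^{(k)}\,\bm{z}^{\alpha}\otimes\Delta_{\bfT}(\bfT^{\alpha})^{*}h\Big\ra,
$$
where the second equality is just the definition of $V_{\bfT}$. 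Since $V_{\bfT}h$ is a convergent series in $H_{k}\otimes\overline{\rm Ran}\,\Delta_{\bfT}$ ($V_{\bfT}$ being a well-defined isometry), continuity of the inner product lets me pass the pairing inside the sum.

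Next I would evaluate the two tensor factors separately. On the $H_{k}$ factor the reproducing property gives $\la k_{\bm w},\bm{z}^{\alpha}\ra_{H_{k}}=\overline{\bm w^{\alpha}}$, and on the $\overline{\rm Ran}\,\Delta_{\bfT}$ factor I would use self-adjointness of $\Delta_{\bfT}$ to move the operators onto $h$:
$$
\la\xi,\Delta_{\bfT}(\bfT^{\alpha})^{*}h\ra_{\cH}=\la\Delta_{\bfT}\xi,(\bfT^{\alpha})^{*}h\ra_{\cH}=\la\bfT^{\alpha}\Delta_{\bfT}\xi,h\ra_{\cH}.
$$
Combining these, the pairing becomes
$$
\la V_{\bfT}^{*}(k_{\bm w}\otimes\xi),h\ra_{\cH}
=\Big\la\sum\limits_{\alpha\in\mathbb{Z}^{d}_{+}} a_{\alpha}^{(k)}\,\overline{\bm w^{\alpha}}\,\bfT^{\alpha}\Delta_{\bfT}\xi,\ h\Big\ra_{\cH}
=\la k_{\bm w}(\bfT)\Delta_{\bfT}\xi,h\ra_{\cH},
$$
the last step being precisely the definition of $k_{\bm w}(\bfT)=\sum_{\alpha}a_{\alpha}^{(k)}\overline{\bm w^{\alpha}}\bfT^{\alpha}$, whose norm convergence was established earlier via the functional calculus. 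Since $h\in\cH$ was arbitrary, the two vectors agree, which is the claimed identity.

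The only genuinely delicate point is the justification for interchanging the infinite sum over $\alpha$ with the inner product and for identifying the resulting series with $k_{\bm w}(\bfT)\Delta_{\bfT}\xi$; everything else is formal. This is handled by the two convergence facts already in hand, namely that $V_{\bfT}h$ converges (so the pairing may be taken term by term against the bounded functional $\la\,\cdot\,,h\ra$) and that $k_{\bm w}(\bfT)$ converges in the norm operator topology (so applying it to $\Delta_{\bfT}\xi$ reproduces exactly the series that appears). With those in place the argument is a routine bookkeeping of the two tensor slots, mirroring the proof of Lemma~4.9 in \cite{BJ}.
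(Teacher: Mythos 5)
Your proof is correct and is essentially the paper's own argument: the paper simply defers to Lemma 4.9 of \cite{BJ}, whose proof is exactly this computation of $\la V_{\bfT}^{*}(k_{\bm w}\otimes\xi),h\ra$ via the definition of $V_{\bfT}$, the reproducing property $\la k_{\bm w},\bm z^{\alpha}\ra_{H_k}=\overline{\bm w^{\alpha}}$, and self-adjointness of $\Delta_{\bfT}$. Your attention to the two convergence facts (the series defining $V_{\bfT}h$ and the norm convergence of $k_{\bm w}(\bfT)$) is exactly the right justification for the term-by-term pairing.
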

	
	\begin{thm}\label{main}
		Let $\bfT = (T_{1},\dots,T_{d})$ be a pure $1/k$-contraction. Then its characteristic function $\theta_{\bfT}$ defined by \eqref{chfn2} induces a partially isometric multiplier
		$$M_{\theta_{\bfT}} : H_{s} \otimes (\cD_{\tilde{\bfT}} \oplus \hat{\cH}) \to H_{k} \otimes \overline{\rm Ran} \Delta_{\bfT}$$
		such that
		\begin{equation}  \label{NewIdentity}
			V_{\bfT} V_{\bfT}^{*}+M_{\theta_{\bfT}}M_{\theta_{\bfT}}^{*}=I.
		\end{equation}
	\end{thm}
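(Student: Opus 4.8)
The plan is to extract everything from the pointwise identity of Lemma \ref{lemma1}, the standard adjoint formula for multiplication operators, and Lemma \ref{V_T*2}. First I would rewrite Lemma \ref{lemma1} in the form
\[
k(\bm z,\bm w)I_{\overline{\rm Ran}\Delta_{\bfT}} - s(\bm z,\bm w)\,\theta_{\bfT}(\bm z)\theta_{\bfT}(\bm w)^{*} = \Delta_{\bfT}\,k_{\bm z}(\bfT)^{*}k_{\bm w}(\bfT)\,\Delta_{\bfT},
\]
and observe that the right-hand side is a positive semi-definite $\cB(\overline{\rm Ran}\Delta_{\bfT})$-valued kernel, since it factors as $\Phi(\bm z)^{*}\Phi(\bm w)$ with $\Phi(\bm w)=k_{\bm w}(\bfT)\Delta_{\bfT}$. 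By the standard characterization of contractive multipliers between reproducing kernel Hilbert spaces, positivity of $kI-s\,\theta_{\bfT}\theta_{\bfT}^{*}$ is exactly what is needed to conclude that $\theta_{\bfT}$ induces a multiplier $M_{\theta_{\bfT}}\colon H_{s}\otimes(\cD_{\tilde{\bfT}}\oplus\hat{\cH})\to H_{k}\otimes\overline{\rm Ran}\Delta_{\bfT}$ with $\|M_{\theta_{\bfT}}\|\le 1$; concretely, the positivity is equivalent to the densely defined assignment $k_{\bm w}\otimes\xi\mapsto s_{\bm w}\otimes\theta_{\bfT}(\bm w)^{*}\xi$ being contractive, and its contractive extension is exactly $M_{\theta_{\bfT}}^{*}$.

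Next I would establish the operator identity \eqref{NewIdentity} by testing both sides against the total family $\{k_{\bm w}\otimes\eta : \bm w\in\mathbb{B}_{d},\ \eta\in\overline{\rm Ran}\Delta_{\bfT}\}$. Using the adjoint formula $M_{\theta_{\bfT}}^{*}(k_{\bm w}\otimes\xi)=s_{\bm w}\otimes\theta_{\bfT}(\bm w)^{*}\xi$, one gets
\[
\big\langle M_{\theta_{\bfT}}M_{\theta_{\bfT}}^{*}(k_{\bm w}\otimes\eta),\,k_{\bm z}\otimes\xi\big\rangle = s(\bm z,\bm w)\,\big\langle \theta_{\bfT}(\bm z)\theta_{\bfT}(\bm w)^{*}\eta,\xi\big\rangle,
\]
while Lemma \ref{V_T*2} gives $V_{\bfT}^{*}(k_{\bm w}\otimes\eta)=k_{\bm w}(\bfT)\Delta_{\bfT}\eta$, hence
\[
\big\langle V_{\bfT}V_{\bfT}^{*}(k_{\bm w}\otimes\eta),\,k_{\bm z}\otimes\xi\big\rangle = \big\langle \Delta_{\bfT}k_{\bm z}(\bfT)^{*}k_{\bm w}(\bfT)\Delta_{\bfT}\eta,\xi\big\rangle.
\]
Adding the two and substituting Lemma \ref{lemma1} makes the $\Delta_{\bfT}$-terms cancel, leaving $k(\bm z,\bm w)\langle\eta,\xi\rangle=\langle k_{\bm w}\otimes\eta,\,k_{\bm z}\otimes\xi\rangle$. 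Since the vectors $k_{\bm w}\otimes\eta$ are total in $H_{k}\otimes\overline{\rm Ran}\Delta_{\bfT}$ and both operators are bounded, this forces $V_{\bfT}V_{\bfT}^{*}+M_{\theta_{\bfT}}M_{\theta_{\bfT}}^{*}=I$.

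Finally, the partially isometric property follows for free: $V_{\bfT}$ is an isometry by Theorem \ref{V_T}, so $V_{\bfT}V_{\bfT}^{*}$ is an orthogonal projection, whence \eqref{NewIdentity} identifies $M_{\theta_{\bfT}}M_{\theta_{\bfT}}^{*}=I-V_{\bfT}V_{\bfT}^{*}$ as the complementary projection. An operator $A$ whose $AA^{*}$ is a projection is a partial isometry, so $M_{\theta_{\bfT}}$ is partially isometric, completing the argument.

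The heavy lifting has already been done in Lemma \ref{lemma1} (through the block-unitary $U$), so no genuinely new difficulty remains. The points requiring care are, first, invoking the multiplier characterization correctly when the domain and range carry the two \emph{different} kernels $s$ and $k$ --- in particular checking that $M_{\theta_{\bfT}}^{*}$ sends kernel functions of $H_{k}$ into $H_{s}$ and that the relevant positive kernel is $kI-s\,\theta_{\bfT}\theta_{\bfT}^{*}$ rather than its reverse --- and, second, the routine but careful bookkeeping of the conjugate-linear slots when passing from the pointwise identity to the matrix entries of $M_{\theta_{\bfT}}M_{\theta_{\bfT}}^{*}$ and $V_{\bfT}V_{\bfT}^{*}$.
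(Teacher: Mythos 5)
Your proposal is correct and takes essentially the same approach as the paper: the paper likewise defines the densely defined map $k_{\bm{w}}\otimes\xi\mapsto s_{\bm{w}}\otimes\theta_{\bfT}(\bm{w})^{*}\xi$, uses Lemma \ref{lemma1} together with Lemma \ref{V_T*2} to show its Gram form equals that of $I-V_{\bfT}V_{\bfT}^{*}$, and from this single computation obtains boundedness, identifies the extension as $M_{\theta_{\bfT}}^{*}$, and deduces \eqref{NewIdentity}. Your separation into the standard positivity criterion for the multiplier $kI-s\,\theta_{\bfT}\theta_{\bfT}^{*}\geq 0$, a testing argument on the total set, and the projection argument for partial isometry is just a repackaging of that same computation.
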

	\begin{proof}
		Define a linear map
		$$A : \text{span}\{k_{\bm{w}} \otimes \xi : \bm{w}\in\mathbb{B}_{d}, \xi\in \overline{\text{Ran}} \Delta_{\bfT} \} \to H_{s} \otimes (\cD_{\tilde{\bfT}} \oplus \hat{\cH})$$
		given by
		$$A (k_{\bm{w}}\otimes\xi)=s_{\bm{w}}\otimes\theta_{\bfT}(\bm{w})^{*}\xi. $$
		Using Lemma \ref{lemma1} we get
		$$\la A (k_{\bm{w}}\otimes\xi) , A (k_{\bm{z}}\otimes\eta) \ra
		=  \la k_{\bm{w}} \otimes \xi , k_{\bm{z}} \otimes \eta \ra - \la V_{\bfT}^{*} (k_{\bm{w}} \otimes \xi) , V_{\bfT}^{*} (k_{\bm{z}} \otimes \eta) \ra. $$
		Now we use Lemma \ref{V_T*2} to obtain
		\begin{equation}
			\la A (k_{\bm{w}}\otimes\xi) , A (k_{\bm{z}}\otimes\eta) \ra = \big\la \big( I - V_{\bfT} V_{\bfT}^{*} \big) (k_{\bm{w}} \otimes \xi) , k_{\bm{z}} \otimes \eta \big\ra \label{id}.
		\end{equation}
		This implies that $$ \| A x\| \leq \|x\|$$ for all $x \in {\rm span}\{k_{\bm{w}} \otimes \xi : \bm{w}\in\mathbb{B}_{d}, \xi\in \overline{\rm Ran} \Delta_{\bfT} \}.$ Thus, $A$ extends as a bounded operator from $H_{k} \otimes \overline{\rm Ran} \Delta_{\bfT}$ to  $H_{s} \otimes (\cD_{\tilde{\bfT}} \oplus \hat{\cH})$ and in fact becomes $M_{\theta_{\bfT}}^{*}$. The proof of \eqref{NewIdentity} follows from \eqref{id}.
	\end{proof}
	
	\begin{remark}\label{K-inner}
	Although $M_{\theta_{\bfT}}$ is a partial isometry, there is a maximal closed subspace of the space of ``constant functions'' on which it acts isometrically. More precisely, let 
	$$\cM = \{ x \in \cD_{\tilde{\bfT}} \oplus \hat{\cH} : \| M_{\theta_{\bfT}} x \| = \|x\| \}.$$ 
	This space is always non-trivial.  Moreover, the function $\varphi_{\bfT}: \mathbb{B}_{d} \to \cB(\cM , \overline{\rm Ran}\Delta_{\bfT})$ defined by $$\varphi_{\bfT}(\bm z) = \theta_{\bfT}(\bm z)|_{\cM} $$ 
	is a $k-$inner function as defined in the introduction.
	\end{remark}
	
	We end by noting that the usual notion of coincidence of characteristic functions can be formulated in this context as well. The characteristic functions of two unitarily equivalent $1/k$-contractions clearly coincide. In the case of pure $1/k$-contractions, the converse can be established by constructing a functional model. Using the isometry $ V_{\bfT}$ as well as the identity \eqref{NewIdentity}, we get that every pure $1/k$-contraction $\bfT = (T_{1},\hdots,T_{d})$ acting on a Hilbert space $\cH$ is unitarily equivalent to the commuting tuple $\mathbb{T} = ( \mathbb{T}_{1}, \hdots,\mathbb{T}_{d})$ on the functional space
	$$\mathbb{H}_{\bfT}=(H_{k} \otimes \overline{\rm Ran}\Delta_{\bfT}) \ominus {\rm Ran} M_{\theta_{\bfT}}$$
	defined by $\mathbb{T}_{i}=P_{\mathbb{H}_{\bfT}}(M_{z_{i}}^{(k)} \otimes I_{\overline{\rm Ran} \Delta_{\bfT}})|_{\mathbb{H}_{\bfT}}$ for $1 \leq i \leq d.$  The functional model yields the following theorem.
	
	\begin{thm}
		Two pure $1/k$-contractions are unitarily equivalent if and only if their characteristic functions coincide.
	\end{thm}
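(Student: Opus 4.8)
The plan is to first fix the meaning of coincidence and then treat the two implications separately, the substantive one being the converse, which rests on the functional model described just above the statement. Say that $\bfT=(T_{1},\dots,T_{d})$ and $\bfS=(S_{1},\dots,S_{d})$ are pure $1/k$-contractions on $\cH$ and $\cK$, with characteristic functions $\theta_{\bfT}\colon\mathbb{B}_{d}\to\cB(\cD_{\tilde{\bfT}}\oplus\hat{\cH},\overline{\rm Ran}\Delta_{\bfT})$ and $\theta_{\bfS}\colon\mathbb{B}_{d}\to\cB(\cD_{\tilde{\bfS}}\oplus\hat{\cK},\overline{\rm Ran}\Delta_{\bfS})$. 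We declare that $\theta_{\bfT}$ and $\theta_{\bfS}$ \emph{coincide} if there exist unitaries $\tau\colon\cD_{\tilde{\bfT}}\oplus\hat{\cH}\to\cD_{\tilde{\bfS}}\oplus\hat{\cK}$ and $\tau_{*}\colon\overline{\rm Ran}\Delta_{\bfT}\to\overline{\rm Ran}\Delta_{\bfS}$ with $\theta_{\bfS}(\bm z)\,\tau=\tau_{*}\,\theta_{\bfT}(\bm z)$ for all $\bm z\in\mathbb{B}_{d}$. For the forward direction, a unitary $\Phi\colon\cH\to\cK$ with $\Phi T_{i}=S_{i}\Phi$ intertwines every operator built from the tuples, namely $\Delta_{\bfT}$ with $\Delta_{\bfS}$, $\Gamma_{\bfT}$ with $\Gamma_{\bfS}$, $\tilde{\bfT}$ with $\tilde{\bfS}$, and $\Pi_{\bfT}$ with $\Pi_{\bfS}$; reading this through the definition \eqref{chfn2} of the characteristic function produces the required $\tau$ and $\tau_{*}$. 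This is the routine bookkeeping already signalled by the phrase ``clearly coincide'', so I would record it briefly and move on.

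The heart of the argument is the converse, and I would run it entirely inside the functional model. First I would promote the pointwise relation $\theta_{\bfS}(\bm z)\tau=\tau_{*}\theta_{\bfT}(\bm z)$ to the level of multipliers: because a multiplier acts coefficientwise under the identification $H_{k}\otimes\cE\cong H_{k}(\cE)$, the unitaries $I_{H_{s}}\otimes\tau$ and $U\bydef I_{H_{k}}\otimes\tau_{*}$ satisfy
$$M_{\theta_{\bfS}}\,(I_{H_{s}}\otimes\tau)=U\,M_{\theta_{\bfT}}.$$
Since $M_{\theta_{\bfT}}$ is a partial isometry (Theorem \ref{main}), its range is closed, and surjectivity of $I_{H_{s}}\otimes\tau$ gives $U\,{\rm Ran}\,M_{\theta_{\bfT}}={\rm Ran}\,M_{\theta_{\bfS}}$. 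As $U$ is unitary it carries orthogonal complements to orthogonal complements, so it maps $\mathbb{H}_{\bfT}=(H_{k}\otimes\overline{\rm Ran}\Delta_{\bfT})\ominus{\rm Ran}\,M_{\theta_{\bfT}}$ unitarily onto $\mathbb{H}_{\bfS}$. Moreover $U$ acts as the identity on the $H_{k}$-factor, hence commutes with $M_{z_{i}}^{(k)}\otimes I$, which forces $U P_{\mathbb{H}_{\bfT}}=P_{\mathbb{H}_{\bfS}}U$; compressing, $\Psi\bydef U|_{\mathbb{H}_{\bfT}}$ intertwines the model tuples, $\Psi\mathbb{T}_{i}=\mathbb{S}_{i}\Psi$ for each $i$.

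Finally I would invoke the model itself: identity \eqref{NewIdentity} shows $V_{\bfT}V_{\bfT}^{*}$ is the projection onto $\mathbb{H}_{\bfT}$, so $V_{\bfT}$ is a unitary of $\cH$ onto $\mathbb{H}_{\bfT}$ implementing $\bfT\cong\mathbb{T}$, and likewise $\bfS\cong\mathbb{S}$; chaining these with $\Psi$ yields $\bfT\cong\bfS$. I expect the main obstacle to be precisely the range-matching step, that is, verifying that $U={I_{H_{k}}\otimes\tau_{*}}$ sends ${\rm Ran}\,M_{\theta_{\bfT}}$ exactly onto ${\rm Ran}\,M_{\theta_{\bfS}}$ and therefore $\mathbb{H}_{\bfT}$ onto $\mathbb{H}_{\bfS}$; this is where closedness of the ranges of the partial isometries and the coefficientwise passage from the coincidence of the functions to the intertwining of the multipliers are both essential, while the remaining commutation and compression identities are formal.
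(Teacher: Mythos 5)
Your proposal is correct and takes essentially the same approach as the paper: the forward direction is routine intertwining bookkeeping, and the converse is precisely the functional-model argument sketched in the paragraph preceding the theorem, using the isometry $V_{\bfT}$, the identity \eqref{NewIdentity}, and the partial-isometry property of $M_{\theta_{\bfT}}$ to identify $\bfT$ with the model tuple $\mathbb{T}$ on $\mathbb{H}_{\bfT}$, and then transporting one model onto the other via the unitary $I_{H_{k}}\otimes\tau_{*}$ induced by coincidence. You in fact supply more detail (the multiplier-level promotion and the range-matching step) than the paper records.
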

	
	\begin{remark}
	Section 4 in Eschmeier' work \cite{Esch} has been an overarching influence while tring to conceive the present work. The generalized Bergman kernels considered by Eschmeier are examples of the kernel $k$. For this class of examples, results analogous to our Lemma \ref{V_T*2} and Theorem \ref{main} were observed in the first display equation of page 97 and in Theorem 14 respectively. An identity analogous to Lemma \ref{lemma1} was proved in the proof of Theorem 14. In a remark preceding his Theorem 14, Eschmeier explained the connection between characteristic function and $K_{m}-$inner functions.  This motivated our Remark \ref{K-inner}.
	\end{remark}
	Acknowledgments.
	The first named author thanks Jaydeb Sarkar for a brief but insightful conversation. His work is supported by a J C Bose Fellowship JCB/2021/000041 of SERB. The second named author thanks Michael Hartz for his comments on Theorem \ref{s_1 = s_2}. He also thanks Sebastian Toth for a discussion on $k-$inner functions. His work is supported by the Prime Minister’s Research Fellowship PM/MHRD-20-15227.03. This work is also supported by the DST FIST program - 2021 [TPN - 700661]. We thank the anonymous referee for useful suggestions.

\end{document}